\documentclass
[
    a4paper,
    DIV=14,
    abstract=true,
    numbers=noenddot
]
{scrartcl}

\usepackage
{
    amsmath,
    amssymb,
    amsthm,
    authblk,
    dsfont, 
    enumitem,
    graphicx,
    mathtools,
    nicefrac,
    tikz,
    xcolor,
}

\usepackage[utf8]{inputenc}

\usepackage[pdffitwindow=false,
            plainpages=false,
            pdfpagelabels=true,
            pdfpagemode=UseOutlines,
            pdfpagelayout=SinglePage,
            bookmarks=false,
            colorlinks=true,
            hyperfootnotes=false,
            linkcolor=blue,
            urlcolor=blue!30!black,
            citecolor=green!50!black]{hyperref}

\usepackage[bf,normal]{caption}
\usepackage{color}

\graphicspath{{pics/}}

\DeclareMathAlphabet{\mathpzc}{OT1}{pzc}{m}{it}

\newcommand{\subfiguretitle}[1]{{\scriptsize{#1}} \\}
\newcommand{\R}{\mathbb{R}}                                       
\newcommand{\C}{\mathbb{C}}                                       
\newcommand{\pd}[2]{\frac{\partial#1}{\partial#2}}                
\newcommand{\innerprod}[2]{\left\langle #1,\, #2 \right\rangle}   
\newcommand{\ts}{\hspace*{0.1em}}                                 
\providecommand{\abs}[1]{\left\lvert #1 \right\rvert}             
\providecommand{\norm}[1]{\left\lVert #1 \right\rVert}            
\providecommand{\vdot}{\boldsymbol\cdot}                          

\newcommand\xqed[1]{\leavevmode\unskip\penalty9999 \hbox{}\nobreak\hfill \quad\hbox{#1}}
\newcommand{\exampleSymbol}{\xqed{$\triangle$}}

\DeclareMathOperator{\diag}{diag}
\DeclareMathOperator{\tr}{tr}
\DeclareMathOperator{\mspan}{span}

\let\div\relax
\DeclareMathOperator{\div}{div}

\newtheorem{theorem}{Theorem}[section]
\newtheorem{corollary}[theorem]{Corollary}
\newtheorem{lemma}[theorem]{Lemma}

\newtheorem{definition}[theorem]{Definition}
\theoremstyle{definition}
\newtheorem{example}[theorem]{Example}

\newtheorem{remark}[theorem]{Remark}
\newtheorem{talgorithm}[theorem]{Algorithm}

\makeatletter
\renewcommand*\env@matrix[1][*\c@MaxMatrixCols c]{%
  \hskip -\arraycolsep
  \let\@ifnextchar\new@ifnextchar
  \array{#1}}
\makeatother

\mathtoolsset{centercolon} 

\allowdisplaybreaks

\DeclareCaptionLabelFormat{period}{#1~#2}
\begin{document}

\title{Functional dynamic mode decomposition: \\ Learning infinite-dimensional systems from data}

\author[1]{Stefan Klus}
\author[2]{Eirini Ioannou}
\affil[1]{School of Mathematical \& Computer Sciences, Heriot--Watt University, Edinburgh, UK}
\affil[2]{Maxwell Institute for Mathematical Sciences, University of Edinburgh and Heriot--Watt University, Edinburgh, UK}

\date{}

\maketitle

\begin{abstract}
Dynamic mode decomposition (DMD) is a data-driven method that computes the best linear approximation of the underlying dynamical system and decomposes the dynamics into a superposition of characteristic spatiotemporal patterns. Originally introduced by the fluid dynamics community, DMD and its extensions have found widespread use in many other research areas such as molecular dynamics, climate science, engineering, finance, and neuroscience. Applications include dimensionality reduction, forecasting, system identification, control, and spectral clustering. In order to apply DMD to partial differential equations, the spatial domain is typically first discretized using finite difference or finite element techniques, thus implicitly rendering the problem finite-dimensional. We extend projected and exact DMD to infinite-dimensional systems. Rather than estimating matrices from vector-valued observations, our DMD variants learn finite-rank operators from functional data such as observables, densities, or wavefunctions. We show that conventional DMD algorithms can be regarded as special cases of their functional DMD counterparts. All results will be illustrated with the aid of guiding examples. We focus in particular on Koopman, Perron--Frobenius, and Koopman--von Neumann operators associated with graphons, ordinary differential equations, and stochastic differential equations.
\end{abstract}

\section{Introduction}

Functional data analysis \cite{LNVR07, Shang14, EH15, WCM16} focuses on the statistical analysis of data consisting of random functions---sampled from a potentially unknown distribution---and plays an important role in modern data science. The idea is to replace vectors by functions and matrices by compact linear operators. As a consequence, the data points are not contained in a finite-dimensional Euclidean space but rather an abstract infinite-dimensional Hilbert space. It has been shown that functional data analysis can outperform conventional multivariate statistics approaches by exploiting information contained in the functions themselves and their derivatives \cite{LNVR07, Shang14}. Many classical statistical methods for dimensionality reduction, linear regression, manifold learning, clustering, and classification have been extended to the functional data analysis setting \cite{WCM16}.

One of the most frequently used methods for the analysis of time-series data is dynamic mode decomposition (DMD) and its various extensions and nonlinear variants~\cite{Schmid10, TRLBK14, WKR15, KNPNCS20, KNKWKSN18}. These data-driven methods approximate transfer operators associated with the dynamical system, e.g., Koopman, Perron--Frobenius, or Koopman--von Neumann operators or the corresponding infinitesimal generators \cite{Ko31, KvN32, LaMa94, Mezic05}, which can then be used to compute spectral properties. Applications include the detection of metastable sets, forecasting, system identification, control, or spectral clustering \cite{BMM12, SS13, MauGon16, KKS16, MMS20, WuNo20, KNG26}. A detailed overview and further references can be found in \cite{KD24, Colbrook24}.

The goal of this work is to derive an extension of DMD that, given only functional data, learns a best-fit operator, i.e., an approximation of the propagator pertaining to an infinite-dimensional dynamical system, and its eigenvalues and eigenfunctions. This not only allows us to predict the dynamics, but also to analyze global properties of the system. We are in particular interested in detecting metastable sets and their implied timescales as well as identifying the underlying system itself. The main advantage of our approach is that it works directly with functional data defined on continuous domains without---depending on the representation of the snapshots---having to discretize it first.

In the last years, data-driven methodologies for analyzing infinite-dimensional dynamical systems using functional data have received little attention, with a few notable exceptions: The approach most closely related to our functional DMD framework is generalized EDMD~\cite{Mauroy21}, which aims to approximate Koopman operators associated with infinite-dimensional nonlinear dynamical systems using a dictionary containing basis functionals, whereas we directly represent the propagator in terms of the snapshots themselves, but focus mainly on linear dynamics. A special case of generalized EDMD, developed independently, is considered in \cite{OTY25}, where Koopman operators associated with random dynamical systems are estimated from distributional snapshot data. Similarly, a regression problem formulation involving Wasserstein distances between time-lagged distributional snapshots was used in \cite{KG20} to approximate the Perron--Frobenius operator. Instead of directly working with infinite-dimensional systems, another possibility is to first embed them into finite-dimensional spaces. In \cite{DHZ16}, embedding techniques are utilized to estimate finite-dimensional invariant sets of infinite-dimensional systems such as delay differential equations. An extension of this work is presented in \cite{ZDG18}, where finite-dimensional unstable manifolds of infinite-dimensional systems---in this case partial differential equations---are computed. Building on the aforementioned ideas, the paper \cite{PHNPSW25} leverages symmetries in PDEs and addresses the issues of unknown state spaces or partial functional measurements with the aid of Takens or Whitney embedding theorems. A comparatively different line of work, which can be regarded as an extension of SINDy to PDEs, focuses on identifying the terms appearing in the right-hand side of the PDE from typically spatially discretized snapshots \cite{RBPK17}. The main contributions of our work are:
\begin{enumerate}[leftmargin=4ex, itemsep=0ex, topsep=0.5ex, label=\roman*)]
\item We extend projected and exact DMD to infinite-dimensional systems. While projected functional DMD can be regarded as a Galerkin projection, exact functional DMD requires computing pseudoinverses of linear operators representing the training data.
\item We show that if we discretize the functional training data and represent it by finite-dimensional vectors, we obtain the classical DMD algorithms as special cases. Additionally, we compare functional DMD and generalized EDMD.
\item All results will be illustrated with guiding examples and benchmark problems ranging from random walks on graphons and Langevin dynamics to Koopman--von Neumann mechanics and the Kuramoto--Sivashinsky equation.
\end{enumerate}
The remainder of the paper is structured as follows: We will introduce projected and exact functional DMD and analyze its properties in Section~\ref{sec:Functional DMD}. Furthermore, we will explore relationships with conventional DMD algorithms and kernel-based methods. Section~\ref{sec:Applications} highlights different applications of functional DMD. Open problems and future work will be discussed in Section~\ref{sec:Conclusion}.

\section{Functional DMD}
\label{sec:Functional DMD}

In this section, we will derive two variants of DMD that work directly with functional data, analyze their properties, and compare the resulting algorithms with conventional DMD methods.

\subsection{Problem setting and training data}

Let $ \mathbb{H} $ be a separable Hilbert space with inner product $ \innerprod{\ts\cdot\ts}{\ts\cdot\ts} $ and induced norm $ \norm{\ts\cdot\ts} $. Furthermore, let $ \mathcal{W} \colon D(\mathcal{W}) \to \mathbb{H} $ be a linear operator, where $ D(\mathcal{W}) \subseteq \mathbb{H} $ denotes the domain of $ \mathcal{W} $. We will consider dynamical systems of the form
\begin{equation} \label{eq:dynamical system}
    \frac{\raisebox{-2pt}{$\partial$}}{\partial t} u(x, t) = \mathcal{W} \ts u(x, t),
\end{equation}
with initial condition $ u(x, 0) = u_0(x) $, where $ x \in \Omega \subseteq \R^d $. In our setting, $ \mathcal{W} $ could, for instance, be an integral or differential operator and $ \mathbb{H} $ a potentially weighted space of square-integrable functions, a Sobolev space, or a reproducing kernel Hilbert space. We assume that $ \mathcal{W} $ generates a strongly continuous semi-flow $ \big(\phi^t)_{t \ge 0} \colon \mathbb{H} \to \mathbb{H} $ such that
\begin{equation*}
    \phi^t\big(u_0\big) = e^{t \ts \mathcal{W}} u_0 = u(\ts\cdot\ts, t).
\end{equation*}
We will call $ e^{t \ts \mathcal{W}} $ the \emph{propagator} associated with the generator $ \mathcal{W} $. For a more detailed introduction, we refer to \cite{Mauroy21}. If $ \mu $ is an eigenvalue of the generator, then due to the spectral mapping theorem $ \lambda = e^{t \ts \mu} $ is, under assumptions detailed in \cite{Pazy83}, an eigenvalue of the corresponding propagator and the eigenfunctions are identical.

\begin{example} \label{ex:heat equation}
Let $ \Omega = (0, 1) $, $ \mathbb{H} = L^2(\Omega) $, and $ T > 0 $. Consider the heat equation
\begin{alignat*}{2}
    \frac{\raisebox{-2pt}{$\partial$}}{\partial t} u(x, t) &= \frac{\raisebox{-2pt}{$\partial^2$}}{\partial x^2} u(x, t) && \forall \ts (x,t) \in \Omega \times (0, T], \\
    u(x, 0) &= u_0(x) && \forall \ts x \in \Omega, \\
    u(0, t) &= 0, \; u(1, t) = 0 ~~ && \forall \ts t \in [0, T].
\end{alignat*}
The linear operator is in this case given by $ \mathcal{W} = \pd{^2}{x^2} $. Provided that $ u_0 $ is sufficiently smooth, we can write
\begin{equation*}
    u_0(x) = \sum_{k=1}^\infty \gamma_k \ts \sin(k \ts \pi \ts x)
    \implies
    u(x, t) = \sum_{k=1}^\infty \gamma_k \ts \sin(k \ts \pi \ts x) \ts e^{-k^2 \pi^2 t}. \tag*{\exampleSymbol}
\end{equation*}
\end{example}

In what follows, we will assume that we have measured or estimated the states $ u_i := u(\ts\cdot\ts, t_i) $ and $ v_i := \phi^\tau(u_i) = u(\ts\cdot\ts, t_i + \tau) $ of the system at $ m $ time points $ t_i $, where $ \tau $ is a fixed lag time. That is, our training data is given by $ \big\{ (u_i, v_i) \big\}_{i=1}^m $. The functions $ u_i $ and $ v_i $ could either be given by short simulations (or experiments), i.e., we generate $ m $ initial conditions $ u_i $ and measure $ v_i = \phi^\tau(u_i) $, or by one long simulation (or experiment), i.e., we select $ u_i = u(\ts\cdot\ts, (i-1) \ts \tau) $ and $ v_i = \phi^\tau(u_i) = u(\ts\cdot\ts, i \ts \tau) $. These two strategies can also be combined.

\subsection{Projected functional DMD}
\label{ssec:Projected functional DMD}

We first derive a variant of functional DMD from a Galerkin projection point of view, a more direct operator-based formulation will be considered later.

\subsubsection{Functional data vectors}

Similar to the data matrices required for DMD, we define arrays containing the training data. This simplifies the notation and facilitates comparisons between conventional DMD and its functional data analysis counterparts.

\begin{definition}[Functional data vectors]
We define the \emph{functional data vectors} $ U, V \in \mathbb{H}^{1 \times m} $ by
\begin{equation*}
    U =
    \begin{bmatrix}
        u_1 & \dots & u_m
    \end{bmatrix}
    \quad \text{and} \quad
    V =
    \begin{bmatrix}
        v_1 & \dots & v_m
    \end{bmatrix},
\end{equation*}
i.e., $ U $ and $ V $ are row-vectors comprising functions.
\end{definition}

We call the functions $ u_i $ contained in $ U $ the \emph{dictionary}, which spans an at most $ m $-dimensional subspace $ \mathbb{U} = \mspan\big\{ u_i \big\}_{i=1}^m $ of $ \mathbb{H} $. Analogously, we define $ \mathbb{V} = \mspan\big\{ v_i \big\}_{i=1}^m $. Vectors $ \alpha, \beta \in \C^m $ thus define functions $ f \in \mathbb{U} $ and $ g \in \mathbb{V} $ via
\begin{equation*}
    f = U \alpha := \sum_{i=1}^m \alpha_i \ts u_i
    \quad \text{and} \quad
    g = V \beta := \sum_{i=1}^m \beta_i \ts v_i.
\end{equation*}
We assume the functions $ u_i $ and $ v_i $ to be real-valued, but allow complex-valued coefficients here since the eigenvalues and eigenfunctions of the propagator will in general not be real-valued. Let $ C_{uu}, C_{uv} \in \R^{m \times m} $ be the Gram matrices defined by
\begin{equation*}
    \big[C_{uu}\big]_{ij} = \innerprod{u_i}{u_j}
    \quad \text{and} \quad
    \big[C_{uv}\big]_{ij} = \innerprod{u_i}{v_j}.
\end{equation*}
These matrices will be required for computing Galerkin projections and pseudoinverses of operators.

\subsubsection{Galerkin projection}
\label{sec:Galerkin projection}

Assume that the eigenfunction $ \varphi_\ell $ associated with the eigenvalue $ \lambda_\ell $ of the propagator $ e^{\tau \ts \mathcal{W}} $ is contained in $ \mathbb{U} $, i.e., there exist coefficients $ \xi^{(\ell)} \in \C^m $ such that
\begin{equation*}
    \varphi_\ell = U \xi^{(\ell)} = \sum_{i=1}^m \xi_i^{(\ell)} \ts u_i.
\end{equation*}
This implies that
\begin{equation*}
    e^{\tau \ts \mathcal{W}} \varphi_\ell
        = \sum_{i=1}^m \xi_i^{(\ell)} \ts e^{\tau \ts \mathcal{W}} \ts u_i
        = \sum_{i=1}^m \xi_i^{(\ell)} \ts v_i
        \stackrel{!}{=} \lambda_\ell \sum_{i=1}^m \xi_i^{(\ell)} \ts u_i = \lambda_\ell \ts \varphi_\ell.
\end{equation*}
Taking the inner product with the test function $ u_j $ on both sides, we have
\begin{equation*}
    \sum_{i=1}^m \xi_i^{(\ell)} \innerprod{v_i}{u_j} = \lambda_\ell \sum_{i=1}^m \xi_i^{(\ell)} \innerprod{u_i}{u_j}.
\end{equation*}
Collecting all equations for $ j = 1, \dots, m $, we finally obtain the generalized eigenvalue problem
\begin{equation*}
    C_{uv} \ts \xi^{(\ell)} = \lambda_\ell \ts C_{uu} \ts \xi^{(\ell)}.
\end{equation*}
Note that this is a standard Galerkin projection of the propagator $ e^{\tau \ts \mathcal{W}} $ onto $ \mathbb{U} $. Unless stated otherwise, we will assume that the functions $ u_i $ are linearly independent so that the matrix $ C_{uu} $ is invertible. We then obtain the eigenvalue problem $ A \ts \xi^{(\ell)} = \lambda_\ell \ts \xi^{(\ell)} $, with $ A = C_{uu}^{-1} \ts C_{uv} $. If $ C_{uu} $ is not invertible, we define $ A = C_{uu}^+ \ts C_{uv} $, where $ ^+ $ denotes the pseudoinverse. Inspired by the well-known and, as shown below, closely related projected DMD, we call our method \emph{projected functional DMD}.

\begin{talgorithm}[Projected functional DMD] \label{alg:projected functional DMD}
The projected functional DMD eigenfunctions $ \varphi_\ell $, with $ \ell = 1, \dots, m $, can be computed as follows:
\begin{enumerate}[leftmargin=4ex, itemsep=0ex, topsep=0.5ex]
\item Define $ A = C_{uu}^{-1} \ts C_{uv} $.
\item Determine the eigenvalues $ \lambda_\ell $ and eigenvectors $ \xi^{(\ell)} $ of $ A $.
\item Compute $ \varphi_\ell = U \xi^{(\ell)} $.
\end{enumerate}
\end{talgorithm}

\begin{example} \label{ex:heat equation projected}

\begin{figure}
    \centering
    \begin{minipage}[t]{0.32\linewidth}
        \centering
        \subfiguretitle{(a)}
        \includegraphics[height=3.9cm]{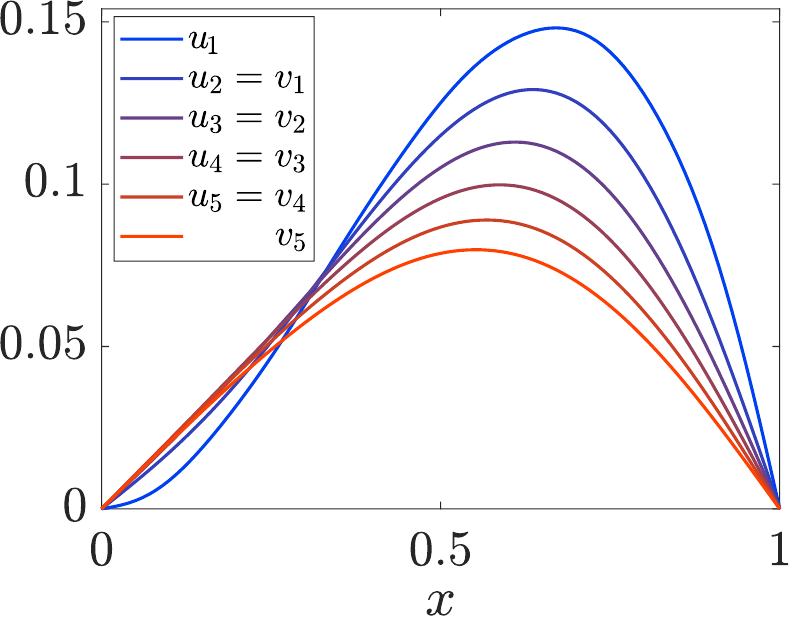}
    \end{minipage}
    \begin{minipage}[t]{0.32\linewidth}
        \centering
        \subfiguretitle{(b)}
        \vspace*{0.22ex}
        \includegraphics[height=3.89cm]{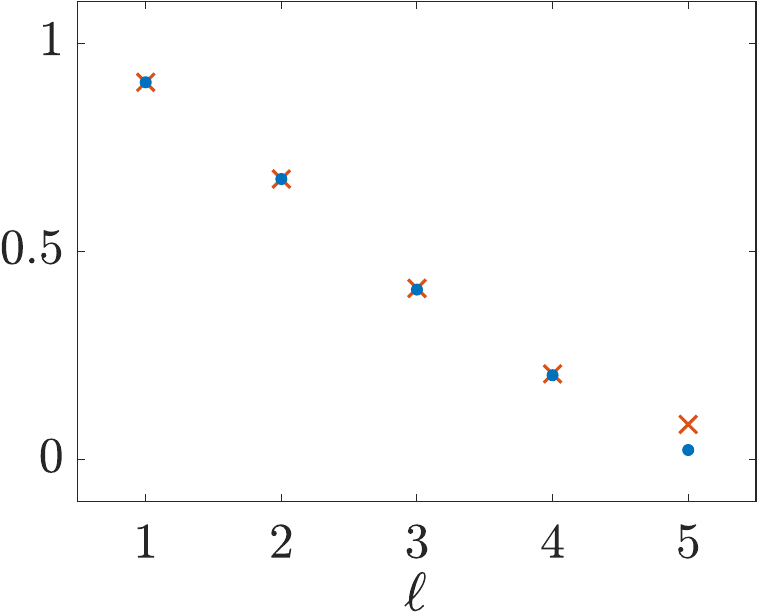}
    \end{minipage}
    \begin{minipage}[t]{0.32\linewidth}
        \centering
        \subfiguretitle{(c)}
        \vspace*{0.2ex}
        \includegraphics[height=3.85cm]{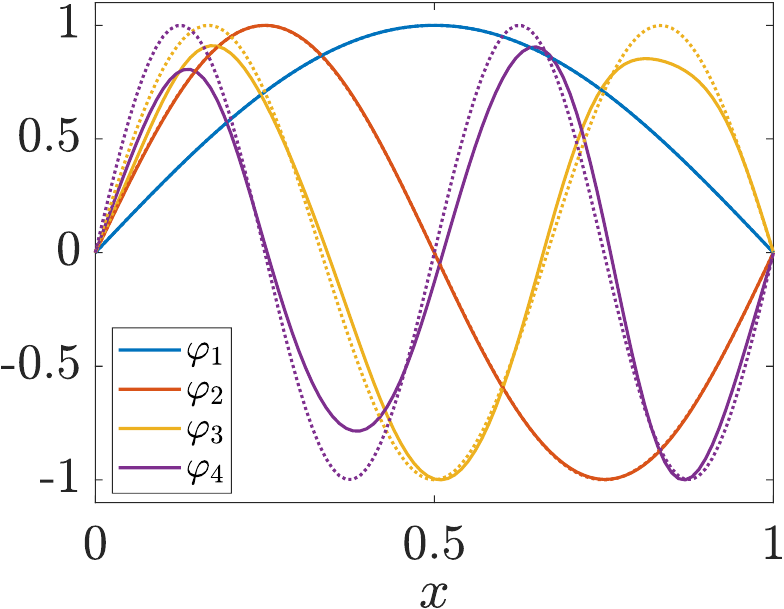}
    \end{minipage}
    \caption{(a)~Functions $ u_i $ and $ v_i $ given by solutions of the heat equation at different times $ t_i $ and $ t_i + \tau $, where $ t_i = (i-1) \ts \tau $. (b)~Comparison of the numerically computed eigenvalues (in blue) and the analytically computed eigenvalues $ e^{-\ell^2 \pi^2 \tau} $ (in red). (c)~First four numerically computed eigenfunctions. The dotted lines represent the true eigenfunctions $ \sin(\ell \ts \pi \ts x) $.}
    \label{fig:heat equation projected}
\end{figure}

Considering again the heat equation defined in Example~\ref{ex:heat equation}, let the initial condition $ u_0 $ be given by a series expansion with coefficients $ \gamma_k $. Using the orthogonality of the sine functions, it follows that
\begin{equation*}
    \big[C_{uu}\big]_{ij}
        = \int_0^1 \sum_{k=1}^\infty \gamma_k \ts \sin(k \ts \pi \ts x) \ts e^{-k^2 \pi^2 t_i} \sum_{l=1}^\infty \gamma_l \ts \sin(l \ts \pi \ts x) \ts e^{-l^2 \pi^2 t_j} \ts \mathrm{d}x
        = \frac{1}{2} \sum_{k=1}^\infty \gamma_k^2 \ts e^{-k^2 \pi^2 (t_i + t_j)}.
\end{equation*}
The entry $ \big[C_{uv}\big]_{ij} $ can be computed by replacing $ t_j $ by $ t_j + \tau $. We choose the initial condition
\begin{equation*}
    u_0(x) = x^2 \ts (1 - x)
    \quad \text{so that} \quad
    \gamma_k = \frac{(-1)^{k+1} \ts 8 - 4}{k^3 \ts \pi^3},
\end{equation*}
the lag time $ \tau = \frac{1}{100} $, and $ m = 5 $ snapshots. Defining $ t_i = (i-1) \ts \tau $, we obtain the functions $ u_i $ and $ v_i $ shown in Figure~\ref{fig:heat equation projected}\ts(a). We then compute the Gram matrices and apply Algorithm~\ref{alg:projected functional DMD}. A~comparison of the numerically and analytically computed eigenvalues can be found in Figure~\ref{fig:heat equation projected}\ts(b). The eigenvalues can be converted to frequencies via
\begin{equation*}
    \omega_\ell = \sqrt{ \frac{-\log(\lambda_\ell)}{\pi^2 \ts \tau} } \implies
    \omega_1 = 1.00,~
    \omega_2 = 2.00,~
    \omega_3 = 3.01,~
    \omega_4 = 4.02,~
    \omega_5 = 6.16.
\end{equation*}
The first four values are accurate estimates of the true frequencies. The corresponding eigenfunctions are shown in Figure~\ref{fig:heat equation projected}\ts(c). Although the training data set consists of just five pairs of functions, projected functional DMD determines good approximations of the leading eigenvalues and eigenfunctions. However, the higher the frequency, the less trustworthy the estimated eigenfunction. This is due to the fact that the coefficients $ \gamma_k $ decay rapidly for increasing~$ k $, which makes them more difficult to estimate. Furthermore, higher frequencies are smoothed out quickly by the dynamics. More accurate estimates can be obtained by increasing $ m $ as well as by using multiple initial conditions. The lag time $ \tau $ also plays an important role. \exampleSymbol
\end{example}

The eigenvalues and eigenfunctions associated with the one-dimensional heat equation are of course well-known. The example just illustrates how spectral properties can be estimated from functional time-series data.

\begin{remark}
We would like to point out that:
\begin{enumerate}[leftmargin=4ex, itemsep=0ex, topsep=0.5ex, label=\roman*)]
\item If we estimate the propagator from one long trajectory, we implicitly have to assume that the initial condition excites all the eigenfunctions we are interested in. Choosing $ u_0 = \varphi_\ell $ or a linear combination of a few eigenfunctions, we will not be able to recover any other eigenfunctions. Generating multiple different initial conditions will in general mitigate this problem.
\item Depending on the method we use for solving \eqref{eq:dynamical system} or measuring the state of the system, the training data could be represented in different ways. If we, for example, apply spectral methods, the functions $ u_i $ and $ v_i $ could be given by Fourier series or Chebyshev polynomials. Alternatively, we could use kernel density estimates or simply a grid or finite element discretization of the spatial domain.
\end{enumerate}
\end{remark}

\begin{example} \label{ex:KDE and grid discretization}
Let us consider two different data representations:
\begin{enumerate}[leftmargin=4ex, itemsep=0ex, topsep=0.5ex, label=\roman*)]
\item Given a reproducing kernel Hilbert space $ \mathbb{H} $ defined by a symmetric positive definite kernel $ k \colon \R^d \times \R^d \to \R $, we represent the functions $ u_i $ and $ v_j $ by kernel density estimates, i.e.,
\begin{equation*}
    u_i = \frac{1}{n} \sum_{\imath=1}^n k\big(\ts\cdot\ts, x_\imath^{(i)}\big)
    \quad \text{and} \quad
    v_j = \frac{1}{n} \sum_{\jmath=1}^n k\big(\ts\cdot\ts, y_\jmath^{(j)}\big),
\end{equation*}
where $ x_\imath^{(i)} $ and $ y_\jmath^{(j)} $ are given samples at times $ t_i $ and $ t_j + \tau $, respectively. It then follows that
\begin{align*}
    \big[C_{uu}\big]_{ij}
        &= \innerprod{u_i}{u_j}
        = \frac{1}{n^2} \sum_{\imath=1}^n \sum_{\jmath=1}^n \big\langle k\big(\ts\cdot\ts, x_\imath^{(i)}\big), k\big(\ts\cdot\ts, x_\jmath^{(j)}\big) \big\rangle
        = \frac{1}{n^2} \sum_{\imath=1}^n \sum_{\jmath=1}^n k\big(x_\imath^{(i)}, x_\jmath^{(j)}\big), \\
    \big[C_{uv}\big]_{ij}
        &= \innerprod{u_i}{v_j}
        = \frac{1}{n^2} \sum_{\imath=1}^n \sum_{\jmath=1}^n \big\langle k(\ts\cdot\ts, x_\imath^{(i)}), k\big(\ts\cdot\ts, y_\jmath^{(j)}\big) \big\rangle
        = \frac{1}{n^2} \sum_{\imath=1}^n \sum_{\jmath=1}^n k\big(x_\imath^{(i)}, y_\jmath^{(j)}\big).
\end{align*}
\item Assume we discretize the spatial domain using a regular grid comprising $ n \gg m $ points $ x_1, \dots, x_n $ and approximate the functions $ u_i $ and $ v_i $ by column vectors $ \mathbf{u}_i, \mathbf{v}_i \in \R^n $, i.e.,
\begin{equation*}
    \mathbf{u}_i =
    \begin{bmatrix}
        u(x_1, t_i) & \dots & u(x_n, t_i)
    \end{bmatrix}^\top
    \quad \text{and} \quad
    \mathbf{v}_i =
    \begin{bmatrix}
        u(x_1, t_i + \tau) & \dots & u(x_n, t_i + \tau)
    \end{bmatrix}^\top,
\end{equation*}
then $ U, V \in \R^{n \times m} $ and the Gram matrices are given by $ C_{uu} = U^\top \ts U $ and $ C_{uv} = U^\top \ts V $. We thus obtain $ A = \big(U^\top U\big)^{-1} U^\top V = U^+ V \in \R^{m \times m} $. DMD, on the other hand, computes eigenvalues and eigenvectors of the matrix $ B = V \ts U^+ \in \R^{n \times n} $. The nonzero eigenvalues of $ A $ and $ B $ are identical. Given $ B \ts \eta^{(\ell)} = \lambda_\ell \ts \eta^{(\ell)} $ for $ \lambda_\ell \ne 0 $, define $ \xi^{(\ell)} = U^+ \eta^{(\ell)} $, then
\begin{equation*}
    A \ts \xi^{(\ell)}
        = \big(U^+ V\big) \ts U^+ \eta^{(\ell)}
        = U^+ B \ts \eta^{(\ell)}
        = \lambda_\ell \ts U^+ \eta^{(\ell)}
        = \lambda_\ell \ts \xi^{(\ell)}
\end{equation*}
and the corresponding eigenvector is given by
\begin{equation*}
    \varphi_\ell = U \xi^{(\ell)} = U \ts U^+ \eta^{(\ell)},
\end{equation*}
which is a projection of $ \eta^{(\ell)} $ onto the span of the columns of $ U $. This illustrates the close relationship between DMD and functional DMD for discrete data. In fact, our method is in this case equivalent to what is called \emph{standard DMD} or \emph{projected DMD} in \cite{TRLBK14}, while the DMD variant that computes eigenvalues and eigenvectors of $ B $ is referred to as \emph{exact DMD}. Note that although $ B $ is of size $ n \times n $, the exact DMD algorithm avoids constructing the full matrix and computes the eigenvectors of a projected matrix instead, which are then mapped back to the original state space. We will derive exact functional DMD below. For a detailed introduction to DMD, see \cite{Schmid10, TRLBK14, KNKWKSN18, Colbrook24}. \exampleSymbol
\end{enumerate}
\end{example}

Example~\ref{ex:heat equation projected} and Example~\ref{ex:KDE and grid discretization} show that, depending on the representation of the functions and the inner product, we might not need to explicitly compute or approximate the integrals required for $ C_{uu} $ and $ C_{uv} $ using numerical integration techniques. Even if we only have gridded data, we could replace the Euclidean inner product between the vector representations of the functions employed by conventional DMD algorithms by more accurate numerical quadrature rules for the approximation of the Gram matrices.

\subsubsection{Best approximation}

We will now show that, given only the training data detailed above, the derived operator representation is indeed the best approximation.

\begin{lemma} \label{lem:inner product}
Given two functions $ f = U \alpha \in \mathbb{U} $ and $ g = U \beta \in \mathbb{U} $, with $ \alpha, \beta \in \C^m $, it holds that $ \innerprod{f}{g} = \alpha^\top C_{uu} \ts \overline{\beta} $. Furthermore, $ \innerprod{f}{v_k} = \alpha^\top \big[C_{uv}\big]_{:, k} $, where $ \big[C_{uv}\big]_{:, k} $ is the $ k $th column of $ C_{uv} $.
\end{lemma}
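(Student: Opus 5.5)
The plan is to expand both functions in the dictionary and use sesquilinearity of the inner product. The convention implicit in the statement is that $\innerprod{\ts\cdot\ts}{\ts\cdot\ts}$ is linear in the first argument and conjugate-linear in the second. This is what produces $\overline{\beta}$ on the right. We also use that the $u_i$ and $v_i$ are real-valued, so $C_{uu}$ and $C_{uv}$ are real matrices whose entries are exactly $\innerprod{u_i}{u_j}$ and $\innerprod{u_i}{v_j}$, with no additional conjugation.

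For the first identity, I would write $f = \sum_{i=1}^m \alpha_i u_i$ and $g = \sum_{j=1}^m \beta_j u_j$. Pulling the finite sums out of both slots gives
\begin{equation*}
    \innerprod{f}{g} = \sum_{i=1}^m \sum_{j=1}^m \alpha_i \ts \overline{\beta_j} \ts \innerprod{u_i}{u_j} = \sum_{i=1}^m \sum_{j=1}^m \alpha_i \big[C_{uu}\big]_{ij} \overline{\beta_j}.
\end{equation*}
The double sum on the right is, by definition of matrix–vector multiplication, $\alpha^\top C_{uu} \ts \overline{\beta}$. No transpose of $\alpha$ is conjugated, since the coefficients of the first argument enter linearly.

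For the second identity, I would expand only the first argument: $\innerprod{f}{v_k} = \sum_{i=1}^m \alpha_i \innerprod{u_i}{v_k}$. Since $v_k$ carries no coefficients, no conjugation appears. The right-hand side is $\sum_i \alpha_i [C_{uv}]_{ik} = \alpha^\top [C_{uv}]_{:,k}$.

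There is no genuine obstacle. The only point requiring care is fixing the conjugate-linearity convention consistently, since the opposite convention would yield $\overline{\alpha}^\top C_{uu} \beta$ instead. The realness of the snapshots is what allows the Gram entries to appear unconjugated. Both sums are finite, so interchanging sums and inner products needs no convergence argument.
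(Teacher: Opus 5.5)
Your proof is correct and matches the paper's own argument: expand both arguments by sesquilinearity, identify the double sum with $\alpha^\top C_{uu}\,\overline{\beta}$, and for $\innerprod{f}{v_k}$ expand only the first argument to get $\sum_i \alpha_i [C_{uv}]_{ik}$. One small correction: you do not need the snapshots to be real-valued, because the Gram entries are defined directly as $\innerprod{u_i}{u_j}$ and $\innerprod{u_i}{v_j}$, so they appear unconjugated for any functions.
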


\begin{proof}
Using the sesquilinearity of the inner product, we have
\begin{equation*}
    \innerprod{f}{g}
        = \innerprod{U \alpha}{U \beta}
        = \sum_{i=1}^m \sum_{j=1}^m \alpha_i \ts \overline{\beta}_j \innerprod{u_i}{u_j}
        = \alpha^\top C_{uu} \ts \overline{\beta}.
\end{equation*}
Similarly,
\begin{equation*}
    \innerprod{f}{v_k}
        = \innerprod{U \alpha}{v_k}
        = \sum_{i=1}^m \alpha_i \innerprod{u_i}{v_k}
        = \alpha^\top C_{uv} \ts e_k,
\end{equation*}
where $ e_k $ is the $ k $th unit vector.
\end{proof}

\begin{definition}[Projected operator]
Given a matrix $ A = \big[a_{ij}\big]_{i,j=1}^m \in \R^{m \times m} $ and a function $ f = U \alpha \in \mathbb{U} $, we define the operator $ \mathcal{A} \colon \mathbb{U} \to \mathbb{U} $ by
\begin{equation*}
    \mathcal{A} f = U (A \ts \alpha).
\end{equation*}
\end{definition}

Our goal is to determine the matrix $ A $ in such a way that it minimizes the prediction error for the training data.

\begin{theorem}
Let $ U $ and $ V $ be as defined above, then the optimal solution of the minimization problem
\begin{equation*}
    \min_{A \in \R^{m \times m}} \sum_{i=1}^m \norm{\mathcal{A} \ts u_i - v_i}^2
\end{equation*}
is given by $ A = C_{uu}^{-1} \ts C_{uv} $.
\end{theorem}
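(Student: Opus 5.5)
Since $u_i = U e_i$, the definition of the projected operator gives $\mathcal{A} u_i = U (A e_i) = U a_i$, where $a_i = [A]_{:,i}$ is the $i$th column of $A$. The objective therefore splits into $m$ independent least-squares problems,
\begin{equation*}
    \sum_{i=1}^m \norm{\mathcal{A} \ts u_i - v_i}^2 = \sum_{i=1}^m \norm{U a_i - v_i}^2 ,
\end{equation*}
and the $i$th term depends only on $a_i$. It suffices to show that each term is minimized by $a_i = C_{uu}^{-1} [C_{uv}]_{:,i}$, the $i$th column of $C_{uu}^{-1} C_{uv}$.

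Fix $i$ and write $a = a_i$. Expanding the squared norm and applying Lemma~\ref{lem:inner product}, everything is real here, so no conjugates appear:
\begin{equation*}
    \norm{U a - v_i}^2 = a^\top C_{uu} \ts a - 2 \ts a^\top [C_{uv}]_{:,i} + \norm{v_i}^2 .
\end{equation*}
This is a quadratic function of $a \in \R^m$. Its Hessian is $2 \ts C_{uu}$, which is positive definite because the $u_i$ are assumed linearly independent. The function is therefore strictly convex, and its unique minimizer is the stationary point. Setting the gradient $2 \ts C_{uu} \ts a - 2 \ts [C_{uv}]_{:,i}$ to zero gives the normal equations $C_{uu} \ts a = [C_{uv}]_{:,i}$, hence $a = C_{uu}^{-1} [C_{uv}]_{:,i}$. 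Stacking the columns yields $A = C_{uu}^{-1} C_{uv}$.

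As an alternative view, one can note that $U a_i$ is then the orthogonal projection of $v_i$ onto $\mathbb{U}$. The normal equations say exactly that $\innerprod{U a_i - v_i}{u_j} = 0$ for all $j$, which matches the Galerkin derivation above.

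The only delicate step is justifying that the stationary point is the global minimizer, which needs the invertibility of $C_{uu}$. That is exactly the standing assumption that the $u_i$ are linearly independent. Without it, minimizers still exist but are not unique, and the pseudoinverse gives the minimum-norm one.
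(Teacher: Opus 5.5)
Your proposal is correct and follows essentially the same route as the paper. Both arguments expand $\norm{\mathcal{A} \ts u_i - v_i}^2$ using Lemma~\ref{lem:inner product} into a quadratic in the columns $A_{:,i}$ and set the gradient to zero; the paper sums the terms into $\tr\big(A^\top C_{uu} A\big) - 2 \tr\big(A^\top C_{uv}\big)$ and differentiates with respect to $A$ in one step. Your column-wise split, together with the observation that the Hessian $2 \ts C_{uu}$ is positive definite, also justifies that the stationary point is the unique global minimizer, which the paper leaves implicit.
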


\begin{proof}
It holds that
\begin{equation*}
    \norm{\mathcal{A} \ts u_i - v_i}^2
        = \innerprod{\mathcal{A} \ts u_i - v_i}{\mathcal{A} \ts u_i - v_i}
        = \innerprod{\mathcal{A} \ts u_i}{\mathcal{A} \ts u_i} - 2 \innerprod{\mathcal{A} \ts u_i}{v_i} + \innerprod{v_i}{v_i}.
\end{equation*}
We first compute $ \mathcal{A} \ts u_i = U (A \ts e_i) = U A_{:, i} $. Using Lemma~\ref{lem:inner product}, this implies
\begin{equation*}
    \innerprod{\mathcal{A} \ts u_i}{\mathcal{A} \ts u_i}
        = A_{:,i}^\top \ts C_{uu} \ts A_{:,i}
\end{equation*}
and
\begin{equation*}
    \innerprod{\mathcal{A} \ts u_i}{v_i}
        = A_{:,i}^\top \big[C_{uv}\big]_{:,i}.
\end{equation*}
We can ignore the third term since it is independent of $ A $ and does not affect the solution of the optimization problem. Summing over $ i $, this yields
\begin{align*}
    \min_{A \in \R^{m \times m}} \sum_{i=1}^m \norm{\mathcal{A} \ts u_i - v_i}^2
        &= \min_{A \in \R^{m \times m}} \tr\big( A^\top C_{uu} \ts A \big) - 2 \ts \tr\big( A^\top C_{uv} \big).
\end{align*}
Computing the derivative with respect to the matrix $ A $ and setting it to zero, we finally obtain $ 2 \ts C_{uu} \ts A - 2 \ts C_{uv} = 0 $ and hence, assuming $ C_{uu} $ is invertible, $ A = C_{uu}^{-1} \ts C_{uv} $.
\end{proof}

\subsubsection{Spectral decomposition and forecasting}

Given $ A \ts \xi^{(\ell)} = \lambda_\ell \ts \xi^{(\ell)} $, we define $ \varphi_\ell = U \xi^{(\ell)} $, which implies
\begin{equation*}
    \mathcal{A} \ts \varphi_\ell = U \big(A \ts \xi^{(\ell)}\big) = \lambda_\ell \ts U \xi^{(\ell)} = \lambda_\ell \ts \varphi_\ell.
\end{equation*}
That is, we can compute eigenvalues and eigenfunctions of the operator $ \mathcal{A} $ by computing eigenvalues and eigenvectors of the matrix $ A $. This is consistent with the Galerkin projection derived above. Constructing the matrices $ \Xi = \big[\xi^{(1)}, \dots, \xi^{(m)}\big] $ and $ \Lambda = \diag(\lambda_1, \dots, \lambda_m) $, we have $ A = \Xi \ts \Lambda \ts \Xi^{-1} $. For a function $ f = U \alpha $, we can thus write
\begin{equation*}
    \mathcal{A} f
        = U \big(A \ts \alpha) = U \big(\Xi \ts \Lambda \ts \underbrace{\Xi^{-1} \ts \alpha}_{=: z})
        = \sum_{\ell=1}^m \lambda_\ell \ts z_\ell \ts U \xi^{(\ell)}
        = \sum_{\ell=1}^m \lambda_\ell \ts z_\ell \ts \varphi_\ell,
\end{equation*}
which then implies
\begin{equation*}
    \mathcal{A}^p f = \sum_{\ell=1}^m \lambda_\ell^p \ts z_\ell \ts \varphi_\ell.
\end{equation*}
If $ f $ is the initial condition at time $ t = 0 $, then $ \mathcal{A}^p f $ is an approximation of the solution at time $ t = p \ts \tau $. By solving the system of linear equations $ \Xi \ts z = \alpha $, we can express the evolution of the dynamical system in terms of the eigenvalues $ \lambda_\ell $, eigenfunctions $ \varphi_\ell $, and modes $ z_\ell $. One limitation though is that only initial conditions contained in $ \mathbb{U} $ are supported.

\subsection{Exact functional DMD}

As shown above, by discretizing a function and representing it as a vector, we obtain projected DMD as a special case. Our goal now is to derive a variant of functional DMD that is more closely related to exact DMD.

\subsubsection{Functional data operators}

We can also interpret the functional data vectors as operators, which then allows us to compute singular value decompositions and pseudoinverses.

\begin{definition}[Functional data operators]
Let $ \alpha, \beta \in \C^ m $. We define the \emph{functional data operators} $ \mathcal{U} \colon \C^m \to \mathbb{H} $ and $ \mathcal{V} \colon \C^m \to \mathbb{H} $ by
\begin{equation*}
    \mathcal{U} \alpha = U \alpha = \sum_{i=1}^m \alpha_i \ts u_i
    \quad \text{and} \quad
    \mathcal{V} \beta = V \beta = \sum_{i=1}^m \beta_i \ts v_i.
\end{equation*}
\end{definition}

\begin{lemma}
The adjoint $ \mathcal{U}^* \colon \mathbb{H} \to \C^m $ is given by
\begin{equation*}
    \mathcal{U}^* g =
    \begin{bmatrix}
        \innerprod{g}{u_1} \\
        \vdots \\
        \innerprod{g}{u_m}
    \end{bmatrix}.
\end{equation*}
\end{lemma}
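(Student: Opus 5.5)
We verify the defining identity of the adjoint directly, using the standard inner product on $\C^m$ with the convention, consistent with Lemma~\ref{lem:inner product}, that the inner product is linear in the first and conjugate-linear in the second argument: $\innerprod{\alpha}{\beta}_{\C^m} = \sum_{i=1}^m \alpha_i \ts \overline{\beta}_i = \beta^* \alpha$. The operator $\mathcal{U}$ is bounded, since it is linear with finite-dimensional domain. Hence its adjoint exists and is unique. It therefore suffices to exhibit a linear map $\mathcal{U}^* \colon \mathbb{H} \to \C^m$ satisfying
\begin{equation*}
    \innerprod{\mathcal{U} \alpha}{g} = \innerprod{\alpha}{\mathcal{U}^* g}_{\C^m}
    \quad \text{for all } \alpha \in \C^m, \ g \in \mathbb{H}.
\end{equation*}

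Fix $\alpha \in \C^m$ and $g \in \mathbb{H}$. Linearity in the first argument gives
\begin{equation*}
    \innerprod{\mathcal{U} \alpha}{g} = \sum_{i=1}^m \alpha_i \innerprod{u_i}{g}.
\end{equation*}
Conjugate symmetry gives $\innerprod{u_i}{g} = \overline{\innerprod{g}{u_i}}$. Substituting, we obtain
\begin{equation*}
    \innerprod{\mathcal{U} \alpha}{g} = \sum_{i=1}^m \alpha_i \ts \overline{\innerprod{g}{u_i}},
\end{equation*}
which is precisely $\innerprod{\alpha}{w}_{\C^m}$ for the vector $w \in \C^m$ with entries $w_i = \innerprod{g}{u_i}$.

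The map $g \mapsto w$ is linear in $g$, because the inner product is linear in its first argument. Since the identity holds for all $\alpha$ and $g$, uniqueness of the adjoint yields $\mathcal{U}^* g = w$, which is the claimed column vector.

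The only point needing care is keeping the conjugation convention consistent. With the opposite convention (conjugate-linear in the first argument), the same computation applies with the roles of the arguments swapped. The result is the same formula, because the entries $\innerprod{g}{u_i}$ are then exactly what appear. There is no real obstacle beyond this.
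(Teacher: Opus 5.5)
Your proof is correct and is essentially the paper's argument: expand $\innerprod{\mathcal{U}\alpha}{g}$ by linearity in the first slot, apply conjugate symmetry, and read off $\mathcal{U}^* g$ from $\innerprod{\alpha}{\mathcal{U}^* g}_{\C^m}$. Your remarks on boundedness, existence and uniqueness of the adjoint, and linearity of $g \mapsto w$ just make explicit what the paper leaves implicit.

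One correction to your closing aside, which the proof does not need. Under the opposite convention (conjugate-linear in the first slot), the same computation gives entries $\innerprod{u_i}{g}$, and in that convention this is the conjugate of $\innerprod{g}{u_i}$. So the displayed formula does depend on the convention: it relies on the paper's choice of an inner product linear in the first argument (as in Lemma~\ref{lem:inner product}), where $g \mapsto \innerprod{g}{u_i}$ is linear, as an adjoint must be.
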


\begin{proof}
Given $ \alpha \in \C^m $ and $ g \in \mathbb{H} $, we have
\begin{equation*}
    \innerprod{\,\mathcal{U} \alpha}{g}_\mathbb{H}
        = \sum_{i=1}^m \alpha_i \innerprod{u_i}{g}_\mathbb{H}
        = \sum_{i=1}^m \alpha_i \overline{\innerprod{g}{u_i}}_\mathbb{H}
        = \innerprod{\alpha}{\mathcal{U}^* g}_{\C^m}. \qedhere
\end{equation*}
\end{proof}

Choosing $ g = U \beta $, this implies $ \innerprod{\,\mathcal{U} \alpha}{U \beta}_\mathbb{H} = \innerprod{\alpha}{\mathcal{U}^* U \beta}_{\C^m} = \innerprod{\alpha}{C_{uu} \ts \beta}_{\C^m} = \alpha^\top C_{uu} \ts \overline{\beta} $.

\subsubsection{Singular value decomposition and pseudoinverse}

Exact DMD requires computing singular value decompositions and pseudoinverses of data matrices. In order to extend this to the functional data analysis setting, we need to compute singular value decompositions and pseudoinverses of functional data operators. For details on spectral decompositions and generalized inverses of bounded linear operators, see, e.g., \cite{EHN96, EH15, MSKS20}.

\begin{definition}[Rank-one operator]
Given two Hilbert spaces $ \mathbb{H}_1 $ and $ \mathbb{H}_2 $ (here, $ \C^m $ and $ \mathbb{H} $, not necessarily in that order) and nonzero elements $ s \in \mathbb{H}_1 $ and $ r \in \mathbb{H}_2 $, we define the linear \emph{rank-one operator} $ r \otimes s \colon \mathbb{H}_1 \to \mathbb{H}_2 $ by
\begin{equation*}
    (r \otimes s) h = \innerprod{h}{s} r.
\end{equation*}
\end{definition}

\begin{lemma}
Let $ C_{uu} = \Theta \ts \Sigma^2 \ts \Theta^\top $, where $ \Theta = [\theta^{(1)}, \dots, \theta^{(m)}] $ contains the eigenvectors and $ \Sigma^2 = \diag(\sigma_1^2, \dots, \sigma_m^2) $ the eigenvalues. The singular value decomposition of the operator $ \mathcal{U} \colon \C^m \to \mathbb{H} $ is then given by
\begin{equation*}
    \mathcal{U} = \sum_{\ell=1}^m \sigma_\ell \ts (r_\ell \otimes s_\ell),
\end{equation*}
with $ r_\ell = \frac{1}{\sigma_\ell} U \ts \theta^{(\ell)} $ and $ s_\ell = \theta^{(\ell)} $.
\end{lemma}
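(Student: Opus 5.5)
The plan is to verify directly that the proposed expansion reproduces $ \mathcal{U} $ and that the families $ \{r_\ell\} $ and $ \{s_\ell\} $ are orthonormal. Together with $ \sigma_\ell \ge 0 $, this is exactly the definition of a singular value decomposition of a finite-rank operator. Since $ C_{uu} $ is a real symmetric positive semidefinite Gram matrix, the spectral theorem gives an orthogonal $ \Theta $ and nonnegative eigenvalues, so $ \sigma_\ell = \sqrt{\sigma_\ell^2} \ge 0 $ is well defined. I will first assume, as in the standing assumption of Section~\ref{sec:Galerkin projection}, that the $ u_i $ are linearly independent, so that all $ \sigma_\ell > 0 $ and $ r_\ell $ is well defined.

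The first step is orthonormality. The $ s_\ell = \theta^{(\ell)} $ are orthonormal in $ \C^m $ because $ \Theta $ is orthogonal; since $ \Theta $ is real, conjugation causes no trouble. For the $ r_\ell $, I will apply Lemma~\ref{lem:inner product} with $ \alpha = \theta^{(\ell)}/\sigma_\ell $ and $ \beta = \theta^{(k)}/\sigma_k $. This gives
\begin{equation*}
    \innerprod{r_\ell}{r_k} = \frac{1}{\sigma_\ell \sigma_k} \, \theta^{(\ell)\top} C_{uu} \, \theta^{(k)} = \frac{\sigma_k^2}{\sigma_\ell \sigma_k} \, \theta^{(\ell)\top} \theta^{(k)} = \delta_{\ell k}.
\end{equation*}

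The second step is the reconstruction. Take any $ \alpha \in \C^m $. By the definition of the rank-one operator,
\begin{equation*}
    \sum_{\ell} \sigma_\ell (r_\ell \otimes s_\ell)\alpha = \sum_\ell \sigma_\ell \innerprod{\alpha}{\theta^{(\ell)}} r_\ell = \sum_\ell \big(\theta^{(\ell)\top}\alpha\big)\, U\theta^{(\ell)}.
\end{equation*}
The $ \sigma_\ell $ cancels, and $ \innerprod{\alpha}{\theta^{(\ell)}} = \theta^{(\ell)\top}\alpha $ because $ \theta^{(\ell)} $ is real. By linearity of $ U $, the right-hand side equals $ U\big(\sum_\ell \theta^{(\ell)}\theta^{(\ell)\top}\big)\alpha = U\,\Theta\Theta^\top\alpha = U\alpha = \mathcal{U}\alpha $, using $ \Theta\Theta^\top = I $. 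As a consistency check, $ \mathcal{U}^*\mathcal{U} = C_{uu} $ by the remark following the adjoint lemma, so the singular values must be the square roots of the eigenvalues of $ C_{uu} $, as claimed.

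The main obstacle is the degenerate case in which $ C_{uu} $ is singular, so that some $ \sigma_\ell = 0 $ and $ r_\ell $ is undefined. There I would restrict the sum to indices with $ \sigma_\ell > 0 $. For the remaining indices, $ \theta^{(\ell)\top} C_{uu}\theta^{(\ell)} = \norm{U\theta^{(\ell)}}^2 = 0 $, hence $ U\theta^{(\ell)} = 0 $. These terms therefore contribute nothing to the reconstruction in the second step, and the formula holds with the sum taken over the nonzero singular values, or equivalently with any choice of $ r_\ell $ for them. The remaining points (the real versus complex conventions in the inner product) are routine.
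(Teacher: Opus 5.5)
Your argument is correct, but it runs in the opposite direction from the paper's.

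The paper \emph{derives} the decomposition. It uses the adjoint to show $\mathcal{U}^*\mathcal{U}\alpha = C_{uu}\ts\alpha$, reads off $\sigma_\ell^2$ and $\theta^{(\ell)}$ as the eigenpairs of $\mathcal{U}^*\mathcal{U}$, and then defines $r_\ell = \frac{1}{\sigma_\ell}\mathcal{U}s_\ell$ via the standard construction of an SVD from this eigendecomposition. Orthonormality of the $r_\ell$ and the expansion $\mathcal{U} = \sum_\ell \sigma_\ell\ts(r_\ell \otimes s_\ell)$ are left implicit in that general theory.

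You instead \emph{verify} the defining properties directly: orthonormality of the $r_\ell$ via Lemma~\ref{lem:inner product}, and the reconstruction via $\Theta\Theta^\top = I$. You use $\mathcal{U}^*\mathcal{U} = C_{uu}$ only as a consistency check.

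The paper's route explains where the formula comes from and identifies the $\sigma_\ell$ as the singular values in one step. Yours is self-contained and makes explicit exactly the two facts the paper takes for granted. It also covers the rank-deficient case, which matters because the paper elsewhere allows $C_{uu}^+$, while its proof excludes that case through the standing assumption that the $u_i$ are linearly independent.

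One small caveat on that case: ``any choice of $r_\ell$'' for $\sigma_\ell = 0$ preserves the identity. However, to keep orthonormal left singular functions, those $r_\ell$ must be chosen orthonormal to the others, or simply dropped as you first propose.
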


\begin{proof}
We compute the eigendecomposition of the operator $ \mathcal{U}^* \mathcal{U} \colon \C^m \to \C^m $, defined by
\begin{equation*}
    \mathcal{U}^* \mathcal{U} \alpha = \mathcal{U}^* \sum_{i=1}^m \alpha_i \ts u_i =
    \begin{bmatrix}
        \sum_{i=1}^m \alpha_i \innerprod{u_1}{u_i} \\
        \vdots \\
        \sum_{i=1}^m \alpha_i \innerprod{u_m}{u_i}
    \end{bmatrix}
    = C_{uu} \ts \alpha.
\end{equation*}
That is, the eigenvalues and eigenfunctions of the operator $ \mathcal{U}^* \mathcal{U} $ are the eigenvalues $ \sigma_\ell^2 $ and eigenvectors $ \theta^{(\ell)} $ of the symmetric positive definite matrix $ C_{uu} $. The singular values of $ \mathcal{U} $ are thus $ \sigma_\ell $ and the right singular functions are $ s_\ell = \theta^{(\ell)} $. The corresponding left singular functions can be computed via $ r_\ell = \frac{1}{\sigma_\ell} \ts \mathcal{U} s_\ell = \frac{1}{\sigma_\ell} \ts U s_\ell $.
\end{proof}

\begin{corollary} \label{cor:pseudoinverse}
The \emph{pseudoinverse} or \emph{Moore--Penrose inverse} $ \mathcal{U}^+ \colon \mathbb{H} \to \C^m $ is defined by
\begin{equation*}
    \mathcal{U}^+ = \sum_{\ell=1}^m \frac{1}{\sigma_\ell} \ts (s_\ell \otimes r_\ell).
\end{equation*}
Furthermore, given functions $ f = U \alpha $ and $ g = V \beta $, it holds that $ \mathcal{U}^+ f = \alpha $ and $ \mathcal{U}^+ g = C_{uu}^{-1} \ts C_{uv} \ts \beta $.
\end{corollary}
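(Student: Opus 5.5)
The first claim, the formula for $\mathcal{U}^+$, follows from the singular value decomposition in the preceding lemma together with the standard definition of the Moore--Penrose inverse of a finite-rank operator. Since $\mathcal{U} = \sum_{\ell} \sigma_\ell (r_\ell \otimes s_\ell)$ with orthonormal systems $\{r_\ell\}$ in $\mathbb{H}$ and $\{s_\ell\}$ in $\C^m$, the operator $\sum_\ell \sigma_\ell^{-1}(s_\ell \otimes r_\ell)$ inverts $\mathcal{U}$ on its range $\mathbb{U} = \mspan\{r_\ell\}$ and vanishes on $\mathbb{U}^\perp$. These are exactly the defining properties of the pseudoinverse, or equivalently the four Penrose conditions, which I would check quickly on the rank-one terms.

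Orthonormality of the $r_\ell$ is the one point that should be stated explicitly. It follows from $\innerprod{U\theta^{(\ell)}}{U\theta^{(k)}} = \theta^{(\ell)\top} C_{uu} \theta^{(k)} = \sigma_\ell^2 \delta_{\ell k}$, using the lemma on inner products in $\mathbb{U}$ with real eigenvectors $\theta^{(\ell)}$. Invertibility of $C_{uu}$ is assumed, so all $\sigma_\ell > 0$.

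For the identities, I would first write $\mathcal{U}^+ h = \sum_\ell \sigma_\ell^{-1}\innerprod{h}{r_\ell}\theta^{(\ell)}$. Substituting $r_\ell = \sigma_\ell^{-1} U\theta^{(\ell)}$ gives
\[
\mathcal{U}^+ h = \sum_\ell \sigma_\ell^{-2}\, \theta^{(\ell)} \theta^{(\ell)\top} \, \mathcal{U}^* h = \Theta \Sigma^{-2}\Theta^\top \, \mathcal{U}^* h = C_{uu}^{-1}\, \mathcal{U}^* h ,
\]
because $\innerprod{h}{U\theta^{(\ell)}} = \sum_i \theta_i^{(\ell)}\innerprod{h}{u_i} = \theta^{(\ell)\top}\mathcal{U}^*h$, the $\theta^{(\ell)}$ being real. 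This compact formula $\mathcal{U}^+ = C_{uu}^{-1}\mathcal{U}^*$ reduces both claims to computing $\mathcal{U}^*$ on the given functions.

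For $f = U\alpha$, the adjoint formula gives $[\mathcal{U}^* f]_j = \innerprod{U\alpha}{u_j} = \sum_i \alpha_i \innerprod{u_i}{u_j}$, so $\mathcal{U}^* f = C_{uu}\alpha$ (using that $C_{uu}$ is real and symmetric). Hence $\mathcal{U}^+ f = \alpha$.

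For $g = V\beta$, similarly $[\mathcal{U}^* g]_j = \sum_i \beta_i \innerprod{v_i}{u_j} = \sum_i \innerprod{u_j}{v_i}\beta_i = [C_{uv}\beta]_j$, since the functions are real-valued. Therefore $\mathcal{U}^+ g = C_{uu}^{-1} C_{uv}\beta$.

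The only delicate point is the bookkeeping of conjugations and index order. Complex coefficients multiply real functions, and one needs $\innerprod{v_i}{u_j} = \innerprod{u_j}{v_i}$, which holds because $u_i, v_i$ are real. I expect no genuine obstacle beyond this.
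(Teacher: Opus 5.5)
Your proof is correct and follows essentially the paper's route. You rewrite $\mathcal{U}^+ = \sum_\ell \sigma_\ell^{-2}\,(\theta^{(\ell)} \otimes U\theta^{(\ell)})$ as $C_{uu}^{-1}\mathcal{U}^*$ using $C_{uu}^{-1} = \Theta\Sigma^{-2}\Theta^\top$, and then evaluate $\innerprod{f}{u_j} = [C_{uu}\alpha]_j$ and $\innerprod{g}{u_j} = [C_{uv}\beta]_j$. Your orthonormality check for the $r_\ell$ and your use of $\innerprod{v_i}{u_j} = \innerprod{u_j}{v_i}$ for real-valued functions make explicit two points the paper uses only tacitly; the paper instead confirms the Moore--Penrose property afterwards via $\mathcal{U}^+\mathcal{U} = \mathcal{I}$ and the fact that $\mathcal{U}\mathcal{U}^+$ is the orthogonal projection onto $\mathbb{U}$.
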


\begin{proof}
For an arbitrary function $ f \in \mathbb{H} $, it holds that
\begin{align*}
    \mathcal{U}^+ f
        &= \sum_{\ell=1}^m \frac{1}{\sigma_\ell} \ts (s_\ell \otimes r_\ell) \ts f
        = \sum_{\ell=1}^m \frac{1}{\sigma_\ell^2} \ts \big(\theta^{(\ell)} \otimes U \ts \theta^{(\ell)}\big) \ts f \\
        &= \sum_{\ell=1}^m \frac{1}{\sigma_\ell^2} \ts \big(\theta^{(\ell)} \otimes \theta^{(\ell)}\big)
        \begin{bmatrix}
            \innerprod{f}{u_1} \\
            \vdots \\
            \innerprod{f}{u_m}
        \end{bmatrix}
        = C_{uu}^{-1}
        \begin{bmatrix}
            \innerprod{f}{u_1} \\
            \vdots \\
            \innerprod{f}{u_m}
        \end{bmatrix}
\end{align*}
since $ C_{uu}^{-1} = \Theta \ts \Sigma^{-2} \ts \Theta^\top $. For functions of the form $ f = U \alpha $ and $ g = V \beta $, we have
\begin{equation*}
    \innerprod{f}{u_j} = \sum_{i=1}^m \innerprod{u_i}{u_j} \alpha_i = \big[C_{uu} \ts \alpha\big]_j
    \quad \text{and} \quad
    \innerprod{g}{u_j} = \sum_{i=1}^m \innerprod{v_i}{u_j} \beta_i = \big[C_{uv} \ts \beta\big]_j,
\end{equation*}
which implies that $ \mathcal{U}^+ f = \alpha $ and $ \mathcal{U}^+ g = C_{uu}^{-1} \ts C_{uv} \ts \beta $. It then follows that
\begin{equation*}
    \mathcal{U}^+ \mathcal{U} \alpha = \mathcal{U}^+ U \alpha = \alpha,
\end{equation*}
i.e., $ \mathcal{U}^+ \mathcal{U} = \mathcal{I}_{\C^m} $, so that $ \mathcal{U} \ts \mathcal{U}^+ \mathcal{U} = \mathcal{U} $ and $ \mathcal{U}^+ \mathcal{U} \ts \mathcal{U}^+ = \mathcal{U}^+ $. Since we assumed the functions $ u_i $ to be linearly independent, $ \mathcal{U} $ has a trivial null space. Furthermore, the range of $ \mathcal{U} $ is $ \mathbb{U} $. The operator $ \mathcal{U} \ts \mathcal{U}^+ $ projects any function $ f $ onto $ \mathbb{U} $ and is idempotent and self-adjoint.
\end{proof}

\subsubsection{Best-fit operator and forecasting}

Mirroring the definition of exact DMD, we now define a functional DMD variant that computes eigenfunctions of the operator $ \widetilde{\mathcal{A}} := \mathcal{V} \ts \mathcal{U}^+ \colon \mathbb{H} \to \mathbb{H} $. Note in particular that if the functions $ u_i $ are linearly independent, then $ \widetilde{\mathcal{A}} u_i = \mathcal{V} \ts \mathcal{U}^+ u_i = \mathcal{V} \ts \mathcal{U}^+ \mathcal{U} e_i = \mathcal{V} e_i = v_i $ so that
\begin{equation*}
    \sum_{i=1}^m \big\|\widetilde{\mathcal{A}} u_i - v_i\big\|_\mathbb{H} = 0.
\end{equation*}

\begin{talgorithm}[Exact functional DMD] \label{alg:exact functional DMD}
In order to compute the exact functional DMD eigenfunctions $ \widetilde{\varphi}_\ell $, we carry out the following steps:
\begin{enumerate}[leftmargin=4ex, itemsep=0ex, topsep=0.5ex]
\item Define $ \widetilde{A} = \Sigma^{-1} \ts \Theta^\top C_{uv} \ts \Theta \ts
\Sigma^{-1} $.
\item Determine the eigenvalues $ \widetilde{\lambda}_\ell $ and eigenvectors $ \widetilde{\xi}^{(\ell)} $ of $ \widetilde{A} $.
\item Compute $ \widetilde{\varphi}_\ell = \frac{1}{\widetilde{\lambda}_\ell \rule{0pt}{1.9ex}} V \ts \Theta \ts \Sigma^{-1} \widetilde{\xi}^{(\ell)} $.
\end{enumerate}
\end{talgorithm}

\begin{theorem}
The functions $ \widetilde{\varphi}_\ell $ computed in Algorithm~\ref{alg:exact functional DMD} are indeed eigenfunctions of $ \widetilde{\mathcal{A}} $.
\end{theorem}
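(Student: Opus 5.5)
We verify the eigenfunction property directly, by applying $\widetilde{\mathcal{A}} = \mathcal{V}\ts\mathcal{U}^+$ to $\widetilde{\varphi}_\ell$. The key is that $\widetilde{\varphi}_\ell$ has the form $V\beta$ with $\beta = \frac{1}{\widetilde{\lambda}_\ell}\Theta\Sigma^{-1}\widetilde{\xi}^{(\ell)}$. Throughout we assume $\widetilde{\lambda}_\ell \neq 0$, since the algorithm divides by it, and that the $u_i$ are linearly independent, so that $\Sigma$ is invertible.

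\textbf{Step 1: apply the pseudoinverse.} By Corollary~\ref{cor:pseudoinverse}, $\mathcal{U}^+ (V\beta) = C_{uu}^{-1} C_{uv}\beta$. Substituting $C_{uu}^{-1} = \Theta\Sigma^{-2}\Theta^\top$ gives
\begin{equation*}
    \mathcal{U}^+ \widetilde{\varphi}_\ell = \frac{1}{\widetilde{\lambda}_\ell}\,\Theta\Sigma^{-2}\Theta^\top C_{uv}\Theta\Sigma^{-1}\widetilde{\xi}^{(\ell)}.
\end{equation*}

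\textbf{Step 2: recognize $\widetilde{A}$.} Write $\Theta\Sigma^{-2}\Theta^\top = \Theta\Sigma^{-1}\,\Sigma^{-1}\Theta^\top$. The product then contains exactly $\widetilde{A} = \Sigma^{-1}\Theta^\top C_{uv}\Theta\Sigma^{-1}$:
\begin{equation*}
    \mathcal{U}^+ \widetilde{\varphi}_\ell = \frac{1}{\widetilde{\lambda}_\ell}\,\Theta\Sigma^{-1}\widetilde{A}\,\widetilde{\xi}^{(\ell)} = \Theta\Sigma^{-1}\widetilde{\xi}^{(\ell)}.
\end{equation*}

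\textbf{Step 3: apply $\mathcal{V}$.} This yields
\begin{equation*}
    \widetilde{\mathcal{A}}\widetilde{\varphi}_\ell = V\Theta\Sigma^{-1}\widetilde{\xi}^{(\ell)} = \widetilde{\lambda}_\ell\,\widetilde{\varphi}_\ell.
\end{equation*}

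\textbf{Nonvanishing.} It remains to check that $\widetilde{\varphi}_\ell \neq 0$, so that it is a genuine eigenfunction. Suppose $V\Theta\Sigma^{-1}\widetilde{\xi}^{(\ell)} = 0$. Then $\mathcal{U}^*$ applied to it also vanishes, i.e., $C_{uv}\Theta\Sigma^{-1}\widetilde{\xi}^{(\ell)} = 0$, up to the conjugation/transposition conventions in the adjoint formula; these need care, since $\mathcal{U}^* V\beta$ has entries $\innerprod{V\beta}{u_j}$, giving $C_{uv}$ or its transpose. Left-multiplying by $\Sigma^{-1}\Theta^\top$ then gives $\widetilde{A}\widetilde{\xi}^{(\ell)} = 0$, hence $\widetilde{\lambda}_\ell\widetilde{\xi}^{(\ell)} = 0$. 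This contradicts $\widetilde{\lambda}_\ell \neq 0$ together with $\widetilde{\xi}^{(\ell)} \neq 0$.

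The main care point is this bookkeeping of transposes. Corollary~\ref{cor:pseudoinverse} uses $\innerprod{g}{u_j} = [C_{uv}\beta]_j$, which is consistent with real-valued data since $\innerprod{v_i}{u_j} = \innerprod{u_j}{v_i}$ and hence $C_{vu} = C_{uv}^\top$. So we simply follow the corollary's stated formula.
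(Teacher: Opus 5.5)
Your proof is correct and follows the paper's proof: apply Corollary~\ref{cor:pseudoinverse} to $\widetilde{\varphi}_\ell = \frac{1}{\widetilde{\lambda}_\ell} V\Theta\Sigma^{-1}\widetilde{\xi}^{(\ell)}$, substitute $C_{uu}^{-1} = \Theta\Sigma^{-2}\Theta^\top$ to expose $\widetilde{A}$, and then apply $\mathcal{V}$. Your check that $\widetilde{\varphi}_\ell \neq 0$ (together with the assumption $\widetilde{\lambda}_\ell \neq 0$) is a small addition that the paper leaves implicit, and it also follows immediately from your Step~2, since $\mathcal{U}^+\widetilde{\varphi}_\ell = \Theta\Sigma^{-1}\widetilde{\xi}^{(\ell)} \neq 0$; this makes the transpose bookkeeping via $\mathcal{U}^*$ unnecessary.
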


\begin{proof}
Using Corollary~\ref{cor:pseudoinverse}, we have
\begin{equation*}
    \mathcal{U}^+ V \ts \Theta \Sigma^{-1} \widetilde{\xi}^{(\ell)} = C_{uu}^{-1} \ts C_{uv} \ts \Theta \Sigma^{-1} \widetilde{\xi}^{(\ell)}.
\end{equation*}
It then follows that
\begin{equation*}
    \widetilde{\mathcal{A}} \widetilde{\varphi}_\ell
        = \frac{1}{\widetilde{\lambda}_\ell \rule{0pt}{2.5ex}} \mathcal{V} \ts \mathcal{U}^+ V \ts \Theta \Sigma^{-1} \widetilde{\xi}^{(\ell)}
        = \frac{1}{\widetilde{\lambda}_\ell \rule{0pt}{2.5ex}} V \underbrace{\Theta \ts \Sigma^{-2} \ts \Theta^\top}_{C_{uu}^{-1}} C_{uv} \ts \Theta \ts \Sigma^{-1} \widetilde{\xi}^{(\ell)}
        = V \Theta \ts \Sigma^{-1} \widetilde{\xi}^{(\ell)}
        = \widetilde{\lambda}_\ell \ts \widetilde{\varphi}_\ell. \qedhere
\end{equation*}
\end{proof}

\begin{example}
If we again discretize the spatial domain and represent the functions $ u_i $ and $ v_i $ by $ n $-dimensional vectors $ \mathbf{u}_i $ and $ \mathbf{v}_i $ as described in Example~\ref{ex:KDE and grid discretization}, then exact functional DMD reduces to exact DMD. This can be seen as follows: Let $ U, V \in \R^{n \times m} $ be the data matrices and $ U = R \ts \Sigma \ts S^\top $ the compact singular value decomposition of $ U $. Thus, $ C_{uu} = S \ts \Sigma^2 \ts S^\top $ and $ \Theta = S $ so that
\begin{equation*}
    \widetilde{A}
        = \Sigma^{-1} \ts S^\top C_{uv} \ts S \ts \Sigma^{-1}
        = \Sigma^{-1} \ts S^\top U^\top V \ts S \ts \Sigma^{-1}
        = R^\top V S \ts \Sigma^{-1}
\end{equation*}
and $ \widetilde{\varphi}_\ell = \frac{1}{\widetilde{\lambda}_\ell \rule{0pt}{1.9ex}} V \ts S \ts \Sigma^{-1} \widetilde{\xi}^{(\ell)} $, which is the exact DMD algorithm presented in \cite{TRLBK14}. \exampleSymbol
\end{example}

We can now use the learned operator $ \widetilde{\mathcal{A}} $ or its spectral decomposition to predict the evolution of the dynamical system as described in Section~\ref{ssec:Projected functional DMD}.

\begin{example}

\begin{figure}
    \centering
    \begin{minipage}[t]{0.32\linewidth}
        \centering
        \subfiguretitle{(a)}
        \includegraphics[height=3.9cm]{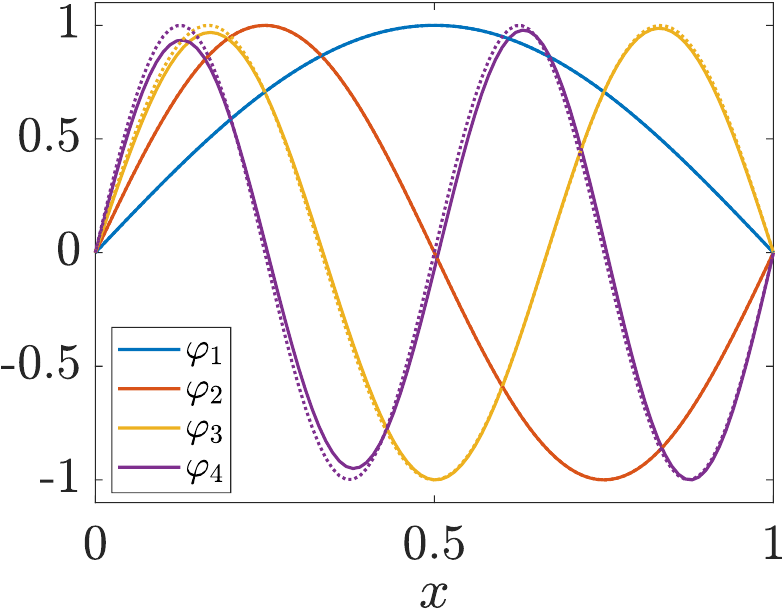}
    \end{minipage}
    \begin{minipage}[t]{0.32\linewidth}
        \centering
        \subfiguretitle{(b)}
        \includegraphics[height=3.91cm]{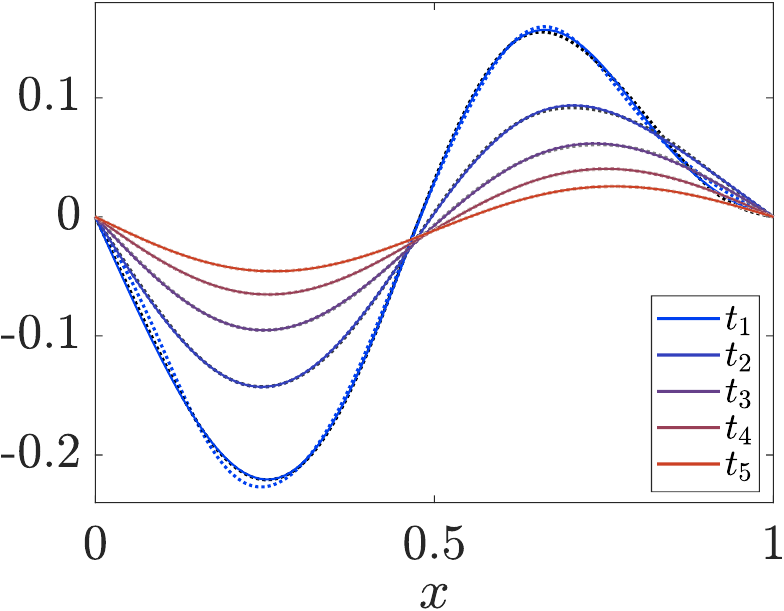}
    \end{minipage}
    \begin{minipage}[t]{0.32\linewidth}
        \centering
        \subfiguretitle{(c)}
        \includegraphics[height=3.91cm]{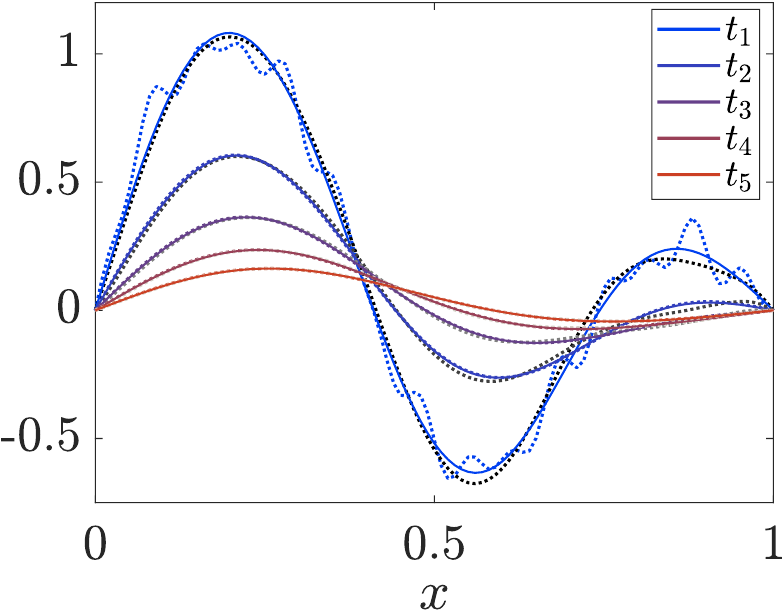}
    \end{minipage}
    \caption{(a)~Eigenfunctions $ \widetilde{\varphi}_\ell $ of the heat equation computed using exact functional DMD. The dotted lines represent the analytically computed eigenfunctions. (b)~Prediction of the dynamics using exact functional DMD, where $ t_i = (i-1) \tau $. The dotted lines in the same color represent the true solution and the gray dotted lines the projected DMD prediction. (c)~Forecasting for an oscillatory temperature profile that cannot be faithfully represented by the functions $ u_i $ or $ v_i $. Although the initial approximation is inaccurate, high frequencies are smoothed out quickly and the prediction improves over time.}
    \label{fig:heat equation exact}
\end{figure}

Let us consider again the heat equation defined in Example~\ref{ex:heat equation} and apply Algorithm~\ref{alg:exact functional DMD}. In order to compare exact and projected functional DMD, we reuse the training data constructed in Example~\ref{ex:heat equation projected}. The eigenfunctions $ \widetilde{\varphi}_\ell $, shown in Figure~\ref{fig:heat equation exact}\ts(a), are slightly more accurate than the projected DMD eigenfunctions presented in Figure~\ref{fig:heat equation projected}\ts(c). Furthermore, we predict the evolution of the temperature using projected and exact functional DMD for a new initial condition (projected onto $ \mathbb{U} $ and $ \mathbb{V} $, respectively). Both methods produce faithful forecasts as shown in Figure~\ref{fig:heat equation exact}\ts(b), exact DMD performs just slightly better. Note that the difference between the predicted and true dynamics decreases in time since higher-frequency terms are damped out. This can also be seen in Figure~\ref{fig:heat equation exact}\ts(c), where we choose a noisy initial condition that cannot be approximated well by the training data. Exact functional DMD is again a bit more accurate. \exampleSymbol
\end{example}

\subsubsection{Comparison with projected functional DMD}

One of the main differences between projected functional DMD and exact functional DMD is that the eigenfunctions $ \varphi_\ell $ are written in terms of the functions $ u_i $, whereas the eigenfunctions $ \widetilde{\varphi}_\ell $ are defined in terms of the functions $ v_i $. The eigenvalues, on the other hand, are identical since
\begin{equation*}
    \big(\Theta \ts \Sigma^{-1}\big)^{-1} A \ts \big(\Theta \ts \Sigma^{-1}\big)
        = \Sigma \ts \Theta^\top C_{uu}^{-1} \ts C_{uv} \ts \Theta \ts \Sigma^{-1}
        = \Sigma^{-1} \ts \Theta^\top C_{uv} \ts \Theta \ts \Sigma^{-1} = \widetilde{A},
\end{equation*}
i.e., the matrices $ A $ and $ \widetilde{A} $ are similar and $ \lambda_\ell = \widetilde{\lambda}_\ell $. If we restrict $ \widetilde{\mathcal{A}} $ to $ \mathbb{V} $, then, given a function $ g = V \beta $, the operator $ \widetilde{\mathcal{A}}\big|_\mathbb{V} \colon \mathbb{V} \to \mathbb{V} $ is defined by $ \widetilde{\mathcal{A}}\big|_\mathbb{V} \ts g = V (C_{uu}^{-1} \ts C_{uv} \ts \beta) $. In order to highlight the similarities between projected and exact functional DMD, we reformulate Algorithm~\ref{alg:exact functional DMD}, omitting the whitening transformation.

\begin{talgorithm}[Reformulated exact functional DMD] \label{alg:reformulated exact functional DMD}
The exact functional DMD eigenfunctions $ \widetilde{\varphi}_\ell $ can be computed as follows:
\begin{enumerate}[leftmargin=4ex, itemsep=0ex, topsep=0.5ex]
\item Define $ A = C_{uu}^{-1} \ts C_{uv} $.
\item Determine the eigenvalues $ \lambda_\ell $ and eigenvectors $ \xi^{(\ell)} $ of $ A $.
\item Compute $ \widetilde{\varphi}_\ell = \frac{1}{\widetilde{\lambda}_\ell \rule{0pt}{1.9ex}} V \ts \xi^{(\ell)} $.
\end{enumerate}
\end{talgorithm}

We have $ A \ts \xi^{(\ell)} = \Theta \ts \Sigma^{-2} \ts \Theta^\top C_{uv} \ts \xi^{(\ell)} = \lambda_\ell \ts \xi^{(\ell)} $ so that
\begin{equation*}
    \Sigma^{-1} \ts \Theta^\top C_{uv} \ts \xi^{(\ell)}
        = \Sigma \ts \Theta^\top \lambda_\ell \ts \xi^{(\ell)}
    \implies
 	\Sigma^{-1} \ts \Theta^\top C_{uv} \ts \Theta \ts \Sigma^{-1} \widetilde{\xi}^{(\ell)}
        = \lambda_\ell \ts \widetilde{\xi}^{(\ell)},
\end{equation*}
where $ \widetilde{\xi}^{(\ell)} = \Sigma \ts \Theta^\top \ts \xi^{(\ell)} $. Additionally, it holds that
\begin{equation*}
    \widetilde{\varphi}_\ell = \frac{1}{\widetilde{\lambda}_\ell \rule{0pt}{1.9ex}} V \ts \xi^{(\ell)} = \frac{1}{\widetilde{\lambda}_\ell \rule{0pt}{1.9ex}} V \ts \Theta \ts \Sigma^{-1} \widetilde{\xi}^{(\ell)}.
\end{equation*}
This shows that Algorithm~\ref{alg:exact functional DMD} and Algorithm~\ref{alg:reformulated exact functional DMD} are equivalent.

\begin{lemma}
Let $ \mathcal{R} $ denote the projection onto the space spanned by the left singular functions $ r_\ell $ of $ \mathcal{U} $, then $ \mathcal{R} \widetilde{\varphi}_\ell = \varphi_\ell $.
\end{lemma}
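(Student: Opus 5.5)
The left singular functions $r_\ell = \frac{1}{\sigma_\ell} U\theta^{(\ell)}$ form an orthonormal basis of $\mathbb{U}$. Hence $\mathcal{R}$ is the orthogonal projection onto $\mathbb{U}$, and it coincides with $\mathcal{U}\,\mathcal{U}^+$ from Corollary~\ref{cor:pseudoinverse}. I will justify this identification either by citing the end of that proof, where $\mathcal{U}\mathcal{U}^+$ is shown to be idempotent and self-adjoint with range $\mathbb{U}$, or directly:
\begin{equation*}
    \mathcal{U}\,\mathcal{U}^+ f = \sum_{\ell=1}^m \frac{1}{\sigma_\ell}\innerprod{f}{r_\ell}\,\mathcal{U} s_\ell = \sum_{\ell=1}^m \innerprod{f}{r_\ell}\, r_\ell = \mathcal{R} f,
\end{equation*}
using $\mathcal{U} s_\ell = \sigma_\ell r_\ell$. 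The lemma then reduces to computing $\mathcal{U}\,\mathcal{U}^+ \widetilde{\varphi}_\ell$.

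For the computation I work with the reformulated version of exact functional DMD (Algorithm~\ref{alg:reformulated exact functional DMD}), which the text has already shown to be equivalent to Algorithm~\ref{alg:exact functional DMD}. There $\widetilde{\varphi}_\ell = \frac{1}{\lambda_\ell} V\xi^{(\ell)}$, where $\xi^{(\ell)}$ is an eigenvector of $A = C_{uu}^{-1}C_{uv}$; the projected DMD eigenfunctions of Algorithm~\ref{alg:projected functional DMD} use the same $\xi^{(\ell)}$. By the second statement of Corollary~\ref{cor:pseudoinverse} with $g = V\beta$ and $\beta = \frac{1}{\lambda_\ell}\xi^{(\ell)}$,
\begin{equation*}
    \mathcal{U}^+ \widetilde{\varphi}_\ell = \frac{1}{\lambda_\ell} C_{uu}^{-1}C_{uv}\,\xi^{(\ell)} = \frac{1}{\lambda_\ell} A\,\xi^{(\ell)} = \xi^{(\ell)}.
\end{equation*}
Applying $\mathcal{U}$ gives $\mathcal{R}\widetilde{\varphi}_\ell = U\xi^{(\ell)} = \varphi_\ell$.

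The calculation itself is short. The step that needs the most care is bookkeeping: the eigenvectors defining $\varphi_\ell$ and $\widetilde{\varphi}_\ell$ must match. If one starts from Algorithm~\ref{alg:exact functional DMD} with $\widetilde{\xi}^{(\ell)}$, the correspondence $\widetilde{\xi}^{(\ell)} = \Sigma\Theta^\top\xi^{(\ell)}$ fixes both the normalization and the choice of eigenvector. This is unambiguous when eigenvalues are simple; with repeated eigenvalues the statement holds for the corresponding choice of eigenvectors. The argument also requires $\lambda_\ell \neq 0$, which is already implicit in Algorithm~\ref{alg:exact functional DMD}, and linear independence of the $u_i$, so that $C_{uu}^{-1}$ exists and the corollary applies.
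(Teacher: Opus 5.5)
Your proof is correct and follows the paper's argument: the paper also writes $\widetilde{\varphi}_\ell = \frac{1}{\lambda_\ell} V\xi^{(\ell)}$ via the reformulated algorithm. It then applies $\mathcal{R}(V\beta) = U(C_{uu}^{-1}C_{uv}\beta)$ together with $C_{uu}^{-1}C_{uv}\xi^{(\ell)} = \lambda_\ell\,\xi^{(\ell)}$. The only difference is that you justify the identification $\mathcal{R} = \mathcal{U}\,\mathcal{U}^+$ through the expansion $\mathcal{U}\,\mathcal{U}^+ f = \sum_\ell \innerprod{f}{r_\ell}\, r_\ell$, which the paper simply asserts.
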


\begin{proof}
For a function $ g = V \beta $, the projection $ \mathcal{R} \colon \mathbb{H} \to \mathbb{H} $ is defined by $ \mathcal{R} g = U(C_{uu}^{-1} \ts C_{uv} \ts \beta) $. Given now an eigenfunction $ \widetilde{\varphi}_\ell = \frac{1}{\widetilde{\lambda}_\ell \rule{0pt}{1.9ex}} V \ts \xi^{(\ell)} $, this implies
\begin{equation*}
    \mathcal{R} \widetilde{\varphi}_\ell = \frac{1}{\widetilde{\lambda}_\ell \rule{0pt}{2.5ex}} U(C_{uu}^{-1} \ts C_{uv} \ts \xi^{(\ell)}) = U \xi^{(\ell)} = \varphi_\ell. \qedhere
\end{equation*}
\end{proof}

For conventional DMD, this was proven in \cite{TRLBK14}. We have therefore shown that the derived projected and exact functional DMD algorithms are in fact infinite-dimensional versions of the classical DMD counterparts, which can be obtained as special cases by choosing $ \mathbb{H} = \R^n $ and representing the functions $ u_i $ and $ v_i $ by vectors $ \mathbf{u}_i $ and $ \mathbf{v}_i $.

\subsection{Relationships between functional DMD and generalized EDMD}

Our functional DMD framework is also related to the generalized EDMD method proposed in \cite{Mauroy21}, which can be viewed as an extension of EDMD \cite{WKR15, KKS16} to nonlinear infinite-dimensional systems. Let $ \mathbb{F} = \{ \zeta \colon \mathbb{H} \to \R \} $ be the space of real-valued functionals, then the Koopman operator $ \mathcal{K}^\tau $ with lag time $ \tau $ associated with \eqref{eq:dynamical system} is defined by
\begin{equation*}
    \mathcal{K}^\tau \zeta(u) = \zeta(\phi^\tau(u)).
\end{equation*}
The Koopman operator for infinite-dimensional systems inherits some of the properties of the Koopman operator for ordinary differential equations. In particular, products of eigenfunctionals are again eigenfunctionals. Let $ \chi_{\ell_1} $ and $ \chi_{\ell_2} $ be eigenfunctionals corresponding to the eigenvalues $ \lambda_{\ell_1} $ and $ \lambda_{\ell_2} $, then
\begin{equation*}
    \mathcal{K}^\tau (\chi_{\ell_1} \chi_{\ell_2})(u) = \mathcal{K}^\tau \chi_{\ell_1}(u) \ts \mathcal{K}^\tau \chi_{\ell_2}(u) = \lambda_{\ell_1} \ts \chi_{\ell_1}(u) \ts \lambda_{\ell_2} \ts \chi_{\ell_2}(u) = \lambda_{\ell_1} \lambda_{\ell_2}(\chi_{\ell_1} \chi_{\ell_2})(u).
\end{equation*}
We specifically focus on linear operators. Assume that $ \big(e^{\tau \ts \mathcal{W}}\big)^* \widehat{\varphi}_\ell = \overline{\lambda}_\ell \ts \widehat{\varphi}_\ell $, i.e., $ \widehat{\varphi}_\ell $ is an eigenfunction of the adjoint of $ e^{\tau \ts \mathcal{W}} $, then $ \chi_\ell = \innerprod{\ts\cdot\ts}{\widehat{\varphi}_\ell} $ is an eigenfunctional of the Koopman operator since
\begin{equation*}
    \mathcal{K}^\tau \chi_\ell(u)
        = \innerprod{e^{\tau \ts \mathcal{W}} u}{\widehat{\varphi}_\ell}
        = \innerprod{u}{\big(e^{\tau \ts \mathcal{W}}\big)^* \widehat{\varphi}_\ell}
        = \innerprod{u}{\overline{\lambda}_\ell \ts \widehat{\varphi}_\ell}
        = \lambda_\ell \innerprod{u}{\widehat{\varphi}_\ell}
        = \lambda_\ell \ts \chi_\ell(u)
\end{equation*}
as also shown in \cite{Mauroy21}. We can thus construct infinitely many additional eigenfunctionals by computing products and powers of these principal eigenfunctionals.

Generalized EDMD projects the Koopman operator for infinite-dimensional systems onto a set of preselected basis functionals $ \{ \zeta_i \}_{i=1}^n $ by first computing the matrices $ Z_1, Z_2 \in \R^{n \times m} $, defined by
\begin{equation*}
    Z_1 =
    \begin{bmatrix}
        \zeta_1(u_1) & \dots & \zeta_1(u_m) \\
        \vdots & \ddots & \vdots \\
        \zeta_n(u_1) & \dots & \zeta_n(u_m)
    \end{bmatrix}
    \quad \text{and} \quad
    Z_2 =
    \begin{bmatrix}
        \zeta_1(v_1) & \dots & \zeta_1(v_m) \\
        \vdots & \ddots & \vdots \\
        \zeta_n(v_1) & \dots & \zeta_n(v_m)
    \end{bmatrix},
\end{equation*}
and then defining $ K^\top = Z_2 \ts Z_1^+ $. The eigenfunctionals of the projected operator are determined by the right eigenvectors of the matrix $ K $.

\begin{example}
We choose two different types of functionals:
\begin{enumerate}[leftmargin=4ex, itemsep=0ex, topsep=0.5ex, label=\roman*)]
\item Defining $ \zeta_i(u) = u(x_i) $ for the spatially discretized domain with grid points $ x_1, \dots, x_n $, it follows that $ Z_1 = U $ and $ Z_2 = V $ are the data matrices defined in Example~\ref{ex:KDE and grid discretization} so that $ K^\top = V \ts U^+ = B $, see also \cite{Mauroy21}. However, DMD computes the right eigenvectors of $ B $, which yields Koopman modes rather than Koopman eigenfunctions, cf.~\cite{KNKWKSN18}.
\item We now define $ n = m $ and $ \zeta_i(u) = \innerprod{u_i}{u} $ so that $ Z_1 = C_{uu} $ and $ Z_2 = C_{uv} $. It follows that $ K^\top = C_{uv} \ts C_{uu}^+ $, i.e., $ K = C_{uu}^+ \ts C_{vu} $, where $ C_{vu} = C_{uv}^\top $. The matrix $ K $ can be regarded as a Galerkin approximation of the operator $ \big(e^{\tau \ts \mathcal{W}}\big)^* $ since
\begin{equation*}
    \big[C_{vu}\big]_{ij} = \innerprod{v_i}{u_j} = \innerprod{e^{\tau \ts \mathcal{W}} u_i}{u_j} = \innerprod{u_i}{\big(e^{\tau \ts \mathcal{W}}\big)^* u_j}.
\end{equation*}
Assume that $ K \widehat{\xi}_\ell = \overline{\lambda}_\ell \ts \widehat{\xi}_\ell $, then $ \widehat{\varphi}_\ell = U \widehat{\xi}_\ell $ is an approximation of an eigenfunction of $ \big(e^{\tau \ts \mathcal{W}}\big)^* $ and $ \chi_\ell = \innerprod{\ts\cdot\ts}{\widehat{\varphi}_\ell} $ is an approximation of an eigenfunctional of the Koopman operator $ \mathcal{K}^\tau $. \exampleSymbol
\end{enumerate}
\end{example}

The functions $ u_i $ will in general not necessarily be a good basis for approximating eigenfunctions of the adjoint operator. Nevertheless, the comparison shows that functional DMD and generalized EDMD are closely related if we restrict ourselves to linear operators. Extensions of functional DMD to nonlinear operators will be considered in future work.

\section{Applications}
\label{sec:Applications}

In this section, we will highlight potential applications of the proposed functional DMD framework and present numerical results.

\subsection{Graphons}

A \emph{graphon} is a Lebesgue-measurable function $ w \colon [0, 1] \times [0, 1] \to [0, 1] $, where $ [0, 1] $ represents a continuum of vertices \cite{ LS06, Janson13, PLC21}. Two vertices $ x, y \in [0, 1] $ are connected by an edge with weight or probability $ w(x, y) $ if $ w(x, y) > 0 $ or unconnected if $ w(x, y) = 0 $. That is, $ w $ can be viewed as a generalization of a weighted adjacency matrix. A graphon is called \emph{symmetric} or \emph{undirected} if $ w(x, y) = w(y, x) $ for all $ x, y \in [0, 1] $. In what follows, we will only consider connected symmetric graphons.\!\footnote{Connectedness implies that any set $ A $ and its complement $ A^c $ are linked by an edge, see, e.g., \cite{PLC21, BPS22}.} Random walk processes are in this case reversible and associated transfer operators are self-adjoint w.r.t.\ suitably reweighted inner products. For a more detailed introduction, we refer to~\cite{KB26}.

\subsubsection{Transfer operators for discrete-time random walks}

The \emph{degree function} $ d \colon [0, 1] \to [0, 1] $ and the \emph{transition density function} $ p \colon [0, 1] \times [0, 1] \to [0, \infty) $ are defined by
\begin{equation*}
    d(x) = \int_0^1 w(x, y) \ts \mathrm{d}y
    \quad \text{and} \quad
    p(x, y) = \frac{w(x, y)}{d(x)}.
\end{equation*}
The unique \emph{invariant density} is then given by
\begin{equation*}
    \pi(x) = \frac{1}{Z} \ts d(x), \quad \text{with } Z = \int_0^1 d(x) \ts \mathrm{d}x.
\end{equation*}
We first consider transfer operators that describe the evolution of random walkers in discrete time.

\begin{definition}[Perron--Frobenius and Koopman operators]
Let $ w $ be a connected symmetric graphon with transition density function~$ p $.
\begin{enumerate}[leftmargin=4ex, itemsep=0ex, topsep=0.5ex, label=\roman*)]
\item We define the Koopman operator $ \mathcal{K} \colon L_{\pi}^2 \to L_{\pi}^2 $ by
\begin{equation*}
    \mathcal{K} f(x) = \int_0^1 p(x, y) \ts f(y) \ts \mathrm{d}y.
\end{equation*}
\item Analogously, we define the \emph{Perron--Frobenius operator} $ \mathcal{P} \colon L_{\nicefrac{1}{\pi}}^2 \to L_{\nicefrac{1}{\pi}}^2 $ by
\begin{equation*}
    \mathcal{P} \rho(x) = \int_0^1 p(y, x) \ts \rho(y) \ts \mathrm{d}y.
\end{equation*}
\end{enumerate}
\end{definition}

Given an eigenfunction $ \varphi_\ell $ of the Koopman operator, we can construct an eigenfunction of the Perron--Frobenius operator by defining $ \widehat{\varphi}_\ell = \pi \ts \varphi_\ell $, i.e., $ \mathcal{K} \varphi_\ell = \mu_\ell \ts \varphi_\ell \implies \mathcal{P} \widehat{\varphi}_\ell = \mu_\ell \ts \widehat{\varphi}_\ell $. It then holds that
\begin{align*}
    \mathcal{K} = \sum_\ell \mu_\ell \ts (\varphi_\ell \otimes \widehat{\varphi}_\ell)
    \quad \text{and} \quad
    \mathcal{P} = \sum_\ell \mu_\ell \ts (\widehat{\varphi}_\ell \otimes \varphi_\ell),
\end{align*}
which allows us to express the transition density function as well as the graphon itself in terms of the eigenfunctions, i.e.,
\begin{equation*}
    p(x, y) = \sum_\ell \mu_\ell \ts \varphi_\ell(x) \ts \widehat{\varphi}_\ell(y)
    \quad \text{and} \quad
    w(x, y) = Z \sum_\ell \mu_\ell \ts \widehat{\varphi}_\ell(x) \ts \widehat{\varphi}_\ell(y).
\end{equation*}
The normalization constant $ Z $ is in general unknown, it is only possible to reconstruct the graphon up to a multiplicative factor. This is due to the fact that multiplying a graphon by a constant does not affect the transition probabilities. Detailed derivations and proofs can be found in \cite{KB26}.

\subsubsection{Transfer operators for continuous-time random walks}

Instead of assuming that all the random walkers jump to another vertex at the same time, we now consider continuous-time random walks, where the waiting times are sampled from an exponential distribution. The associated continuous-time dynamics, which are closely related to the discrete-time counterparts, have been derived in \cite{PLC21}.

\begin{definition}[Rate operator]
The \emph{rate operator} $ \mathcal{Q} = \mathcal{K} - \mathcal{I} $ and its adjoint $ Q^* = \mathcal{P} - \mathcal{I} $ are defined by
\begin{equation*}
    \mathcal{Q} \ts f(x) = \int_0^1 p(x, y) \ts f(y) \ts \mathrm{d}y - f(x)
    \quad \text{and} \quad
    \mathcal{Q}^* \rho(x) = \int_0^1 p(y, x) \ts \rho(y) \ts \mathrm{d}y -\rho(x).
\end{equation*}
\end{definition}

The operator $ \mathcal{Q} $ can be regarded as a generalization of the rate matrix for a continuous-time Markov chain defined on a finite state space. Note that, given eigenfunctions $ \varphi_\ell $ of $ \mathcal{K} $ and $ \widehat{\varphi}_\ell $ of~$ \mathcal{P} $, it holds that $ \mathcal{Q} \varphi_\ell = (\mu_\ell - 1) \ts \varphi_\ell $ and $ \mathcal{Q}^* \widehat{\varphi}_\ell = (\mu_\ell - 1) \ts \widehat{\varphi}_\ell $. The evolution of observables and probability densities associated with the continuous-time random walk is then described by
\begin{equation} \label{eq:graphon dynamics}
    \frac{\raisebox{-2pt}{$\partial$}}{\partial t}f(x, t) = \mathcal{Q} \ts f(x, t)
    \quad \text{and} \quad
    \frac{\raisebox{-2pt}{$\partial$}}{\partial t} \ts \rho(x, t) = \mathcal{Q}^* \rho(x, t).
\end{equation}

\begin{example} \label{ex:triple-peak graphon}

\begin{figure}
    \centering
    \begin{minipage}[t]{0.32\linewidth}
        \centering
        \subfiguretitle{(a)}
        \includegraphics[height=3.9cm]{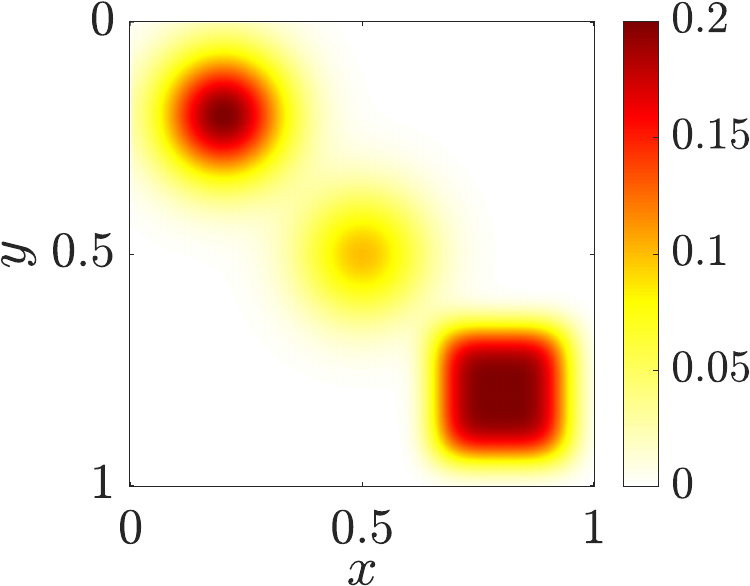}
    \end{minipage}
    \begin{minipage}[t]{0.32\linewidth}
        \centering
        \subfiguretitle{(b)}
        \includegraphics[height=3.9cm]{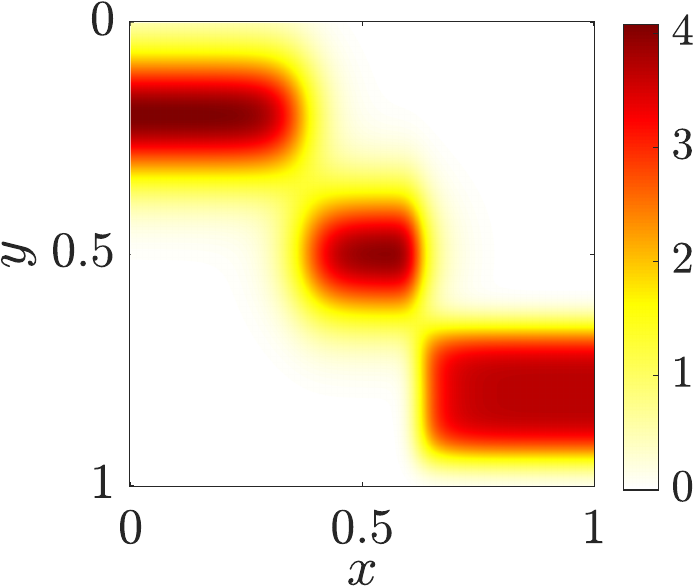}
    \end{minipage}
    \begin{minipage}[t]{0.32\linewidth}
        \centering
        \subfiguretitle{(c)}
        \includegraphics[height=3.95cm]{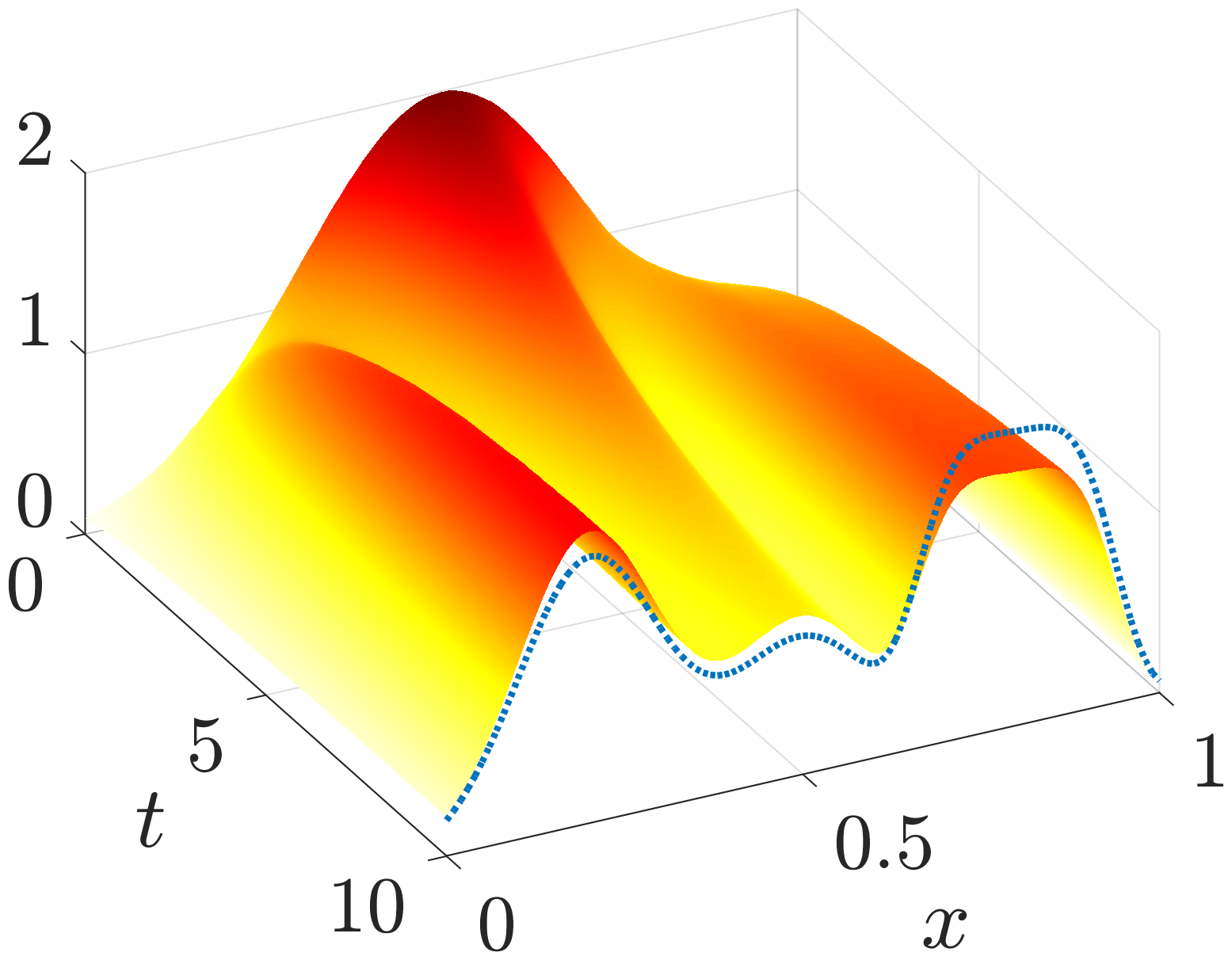}
    \end{minipage}
    \caption{(a)~Symmetric graphon $ w $ with three peaks at $ 0.2 $, $ 0.5 $, and $ 0.8 $. The peak in the middle is less metastable than the other two. (b)~Corresponding transition density function $ p $. (c)~Evolution of a probability density $ \rho $ in time. The initial density, a Gaussian with bandwidth $ \sigma = 0.2 $ centered at $ x = \frac{1}{2} $, spreads to the other clusters and converges to the invariant density, represented by the dotted blue line.}
    \label{fig:triple-peak graphon}
\end{figure}

In order to illustrate the continuous-time dynamics, we consider the graphon
\begin{equation*}
    w(x, y) =  0.2 \ts e^{-\frac{(x - 0.2)^2 + (y - 0.2)^2}{0.02}}
             + 0.1 \ts e^{-\frac{(x - 0.5)^2 + (y - 0.5)^2}{0.02}}
             + 0.2 \ts e^{-\frac{(x - 0.8)^4 + (y - 0.8)^4}{0.0005}},
\end{equation*}
shown in Figure~\ref{fig:triple-peak graphon}\ts(a), comprising three clusters~\cite{KB26}. The corresponding transition density function is visualized in Figure~\ref{fig:triple-peak graphon}\ts(b). Random walkers will typically spend a long time in one of the clusters before transitioning to a neighboring cluster. That is, the clusters form so-called metastable sets. Metastability implies that the process will appear to be almost equilibrated before transitioning to another part of the state space.\!\footnote{For more rigorous definitions, including relationships with spectral properties of transfer operators, see \cite{Davies82a, HS06, Bovier16}.} Given any initial density $ \rho_0 $, it will converge to the invariant density $ \pi \sim d $. This is shown in Figure~\ref{fig:triple-peak graphon}\ts(c). Due to the metastability of the random walk process (which manifests itself in eigenvalues of $ \mathcal{K} $ and $ \mathcal{P} $ close to one), the convergence is slow. \exampleSymbol
\end{example}

\begin{definition}[Graphon Laplacian]
We define the random-walk normalized \emph{graphon Laplacian} by $ \mathcal{L} = -\mathcal{Q} = \mathcal{I} - \mathcal{K} $ so that its adjoint is $ \mathcal{L}^* = -\mathcal{Q}^* = \mathcal{I} - \mathcal{P} $.
\end{definition}

The eigenvalues $ \mu_\ell $ of $ \mathcal{K} $ and $ \mathcal{P} $ are contained in the closed unit disk and, since we assume the graphon to be symmetric, also real-valued. The eigenvalues of $ \mathcal{L} $ and $ \mathcal{L}^* $ are hence contained in the interval $ [0, 2] $. Graph or graphon Laplacians are often used for spectral clustering and studying consensus problems \cite{Luxburg07, PLC21, BPS22, KB26}.

\subsubsection{Learning graphons from functional data}

Assuming we have only access to a time-evolving density of random walkers, but not the positions of the random walkers themselves, we show that it is still possible to detect clusters and to identify the graphon.

\begin{example}
\begin{figure}
    \definecolor{matlab1}{RGB}{0, 114, 189}
    \definecolor{matlab2}{RGB}{217, 83, 25}
    \definecolor{matlab3}{RGB}{237, 177, 32}
    \newcommand{\cdash}[1]{\textcolor{#1}{\rule[0.5ex]{1em}{0.2ex}}}
    \centering
    \begin{minipage}[t]{0.32\linewidth}
        \centering
        \subfiguretitle{(a)}
        \vspace*{0.8ex}
        \includegraphics[height=3.8cm]{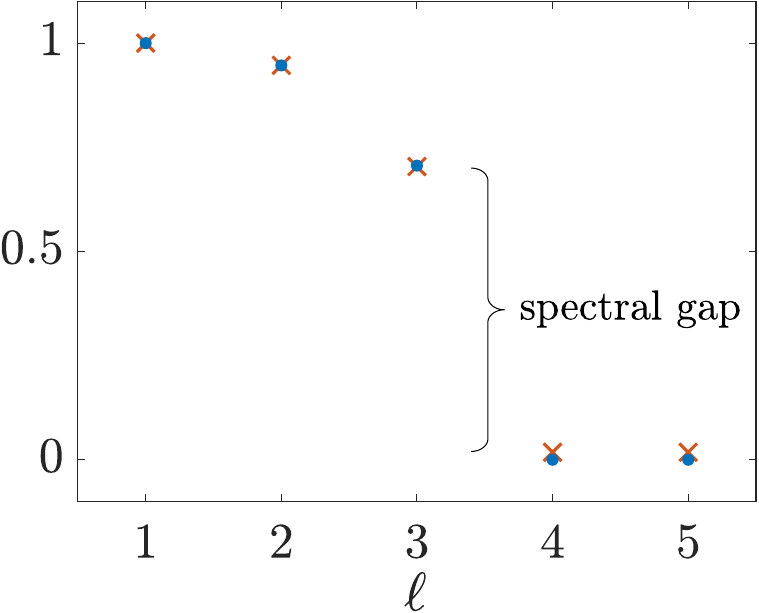}
    \end{minipage}
    \hspace{1ex}
    \begin{minipage}[t]{0.32\linewidth}
        \centering
        \subfiguretitle{(b)}
        \vspace*{0.8ex}
        \includegraphics[height=3.75cm]{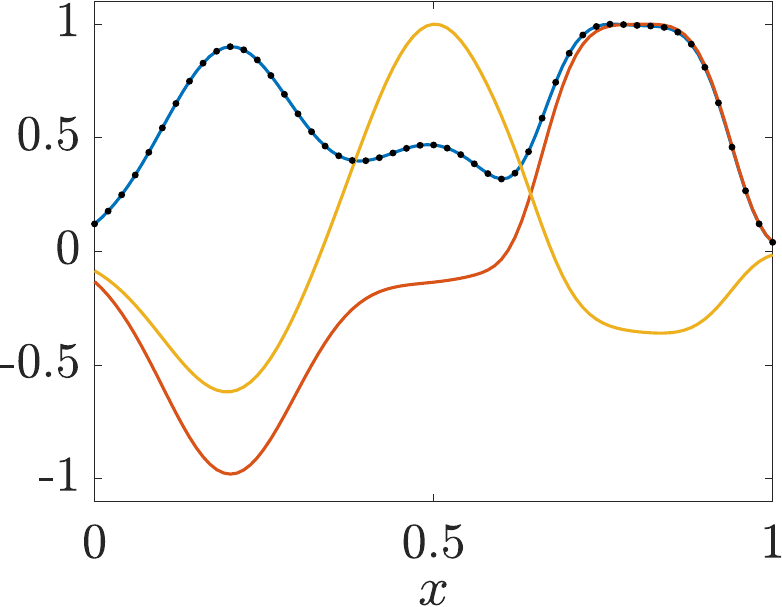}
    \end{minipage}
    \begin{minipage}[t]{0.32\linewidth}
        \centering
        \subfiguretitle{(c)}
        \includegraphics[height=3.9cm]{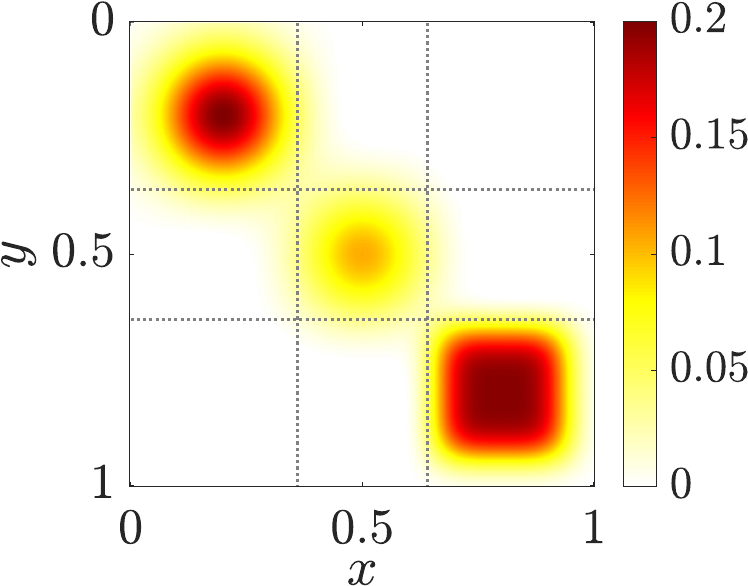}
    \end{minipage}
    \hspace{1ex}
    \caption{(a)~Dominant eigenvalues $ \mu_\ell $ of $ \mathcal{P} $. The red crosses, shown for comparison, are the eigenvalues estimated from one long discrete-time random walk. (b)~Eigenfunctions of $ \mathcal{P} $, where \cdash{matlab1} denotes the first, \cdash{matlab2} the second, and \cdash{matlab3} the third eigenfunction. The black dots represent the true invariant density $ \pi $. (c)~Rank-3 reconstruction of $ w $ using the estimated eigenfunctions. The resulting graphon is virtually indistinguishable from the true graphon shown in Figure~\ref{fig:triple-peak graphon}\ts(a). The dotted gray lines separate the identified clusters.}
    \label{fig:triple-peak results}
\end{figure}

Let us analyze the graphon introduced in Example~\ref{ex:triple-peak graphon}. We define the initial density $ \rho_0 $ to be a Gaussian with bandwidth $ \sigma = 0.2 $ centered at $ x = \frac{1}{2} $, see Figure~\ref{fig:triple-peak graphon}\ts(c), and simulate~\eqref{eq:graphon dynamics} from $ t = 0 $ to $ t = 5 $ using a lag time of $ \tau = 0.1 $ so that $ U $ and $ V $ contain 50 snapshots. We compute the matrices $ C_{uu} $ and $ C_{uv} $ and apply projected functional DMD. We then estimate the eigenvalues $ \mu_\ell $ of $ \mathcal{P} $, shown in Figure~\ref{fig:triple-peak results}\ts(a), from the approximated eigenvalues $ \lambda_\ell $ of $ e^{\tau \ts Q^*} $ using
\begin{equation*}
    \mu_\ell = \frac{\log(\lambda_\ell)}{\tau} + 1.
\end{equation*}
This allows us to compare the eigenvalues with the values obtained by considering discrete-time random walks, see \cite{KB26}. There are three dominant eigenvalues, followed by a spectral gap, implying the existence of three metastable sets. The corresponding eigenfunctions are shown in Figure~\ref{fig:triple-peak results}\ts(b). In order to detect clusters in the graphon, we apply $ k $-means with $ k = 3 $ to the dominant three eigenfunctions. The eigenfunctions can also be used to reconstruct the graphon itself, up to a multiplicative constant, as illustrated in Figure~\ref{fig:triple-peak results}\ts(c). Furthermore, we can now use the identified eigenfunctions to predict the evolution of the system. \exampleSymbol
\end{example}

The example demonstrates that we can extract the invariant density despite the fact that the simulation has not nearly reached it yet. Additionally, we can identify the graphon itself and forecast the dynamics using only functional data.

\subsection{Stochastic differential equations}

Although functional DMD can in the same way be applied to arbitrary autonomous stochastic differential equations, we will specifically consider Langevin dynamics. Let $ \mathbb{X} \subset \R^d $ be the state space. Given an energy potential $ W \colon \R^d \to \R $ and an inverse temperature $ \beta > 0 $, the overdamped Langevin equation is defined by
\begin{equation*}
    \mathrm{d}X_t = -\nabla W(X_t) \ts \mathrm{d}t + \sqrt{2 \ts \beta^{-1}} \ts \mathrm{d}B_t, \quad X_0 \sim \rho_0,
\end{equation*}
where $ B_t $ is a $ d $-dimensional Wiener process and $ \rho_0 $ is the initial density of $ X $. Depending on the potential and the inverse temperature, such systems often exhibit metastable behavior.

\subsubsection{Transfer operators for Langevin dynamics}

The evolution of observables $ f $ and probability densities $ \rho $ associated with the stochastic process is described by the \emph{Kolmogorov backward equation} and \emph{Fokker--Planck equation}, respectively, defined by
\begin{equation*}
    \frac{\raisebox{-2pt}{$\partial$}}{\partial t}f(x, t) = \mathcal{L} \ts f(x, t)
    \quad \text{and} \quad
    \frac{\raisebox{-2pt}{$\partial$}}{\partial t} \ts \rho(x, t) = \mathcal{L}^* \rho(x, t),
\end{equation*}
with
\begin{equation*}
    \mathcal{L} f = -\nabla W \vdot \nabla f + \beta^{-1} \Delta f
    \quad \text{and} \quad
    \mathcal{L}^* \rho = \Delta W \ts \rho + \nabla W \vdot \nabla \rho + \beta^{-1} \Delta \rho.
\end{equation*}
The corresponding propagators are the Koopman operator $ \mathcal{K}^\tau $ and Perron--Frobenius operator $ \mathcal{P}^\tau $, which are closely related to the same operators defined above for graphons. The invariant density (also called Gibbs or Boltzmann distribution) $ \pi \sim e^{-\beta W} $ of the overdamped Langevin equation satisfies $ \mathcal{L}^* \pi = 0 $ or, equivalently, $ \mathcal{P}^\tau \pi = \pi $. A detailed introduction to Langevin dynamics and transfer operators for stochastic differential equations can be found in \cite{LaMa94, SS13, Pav14, SKH23}.

\begin{example} \label{ex:Himmelblau}

\begin{figure}
    \centering
    \begin{minipage}[t]{0.32\linewidth}
        \centering
        \subfiguretitle{(a)}
        \includegraphics[height=4.1cm]{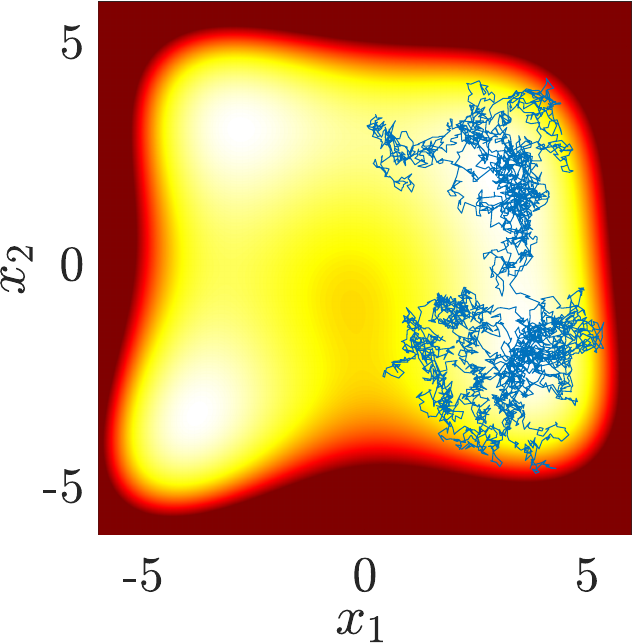}
    \end{minipage}
    \begin{minipage}[t]{0.32\linewidth}
        \centering
        \subfiguretitle{(b)}
        \includegraphics[height=4.1cm]{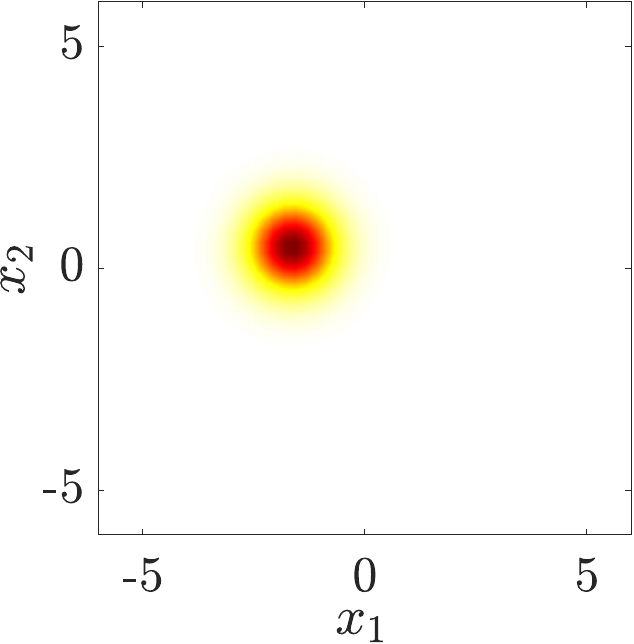}
    \end{minipage}
    \begin{minipage}[t]{0.32\linewidth}
        \centering
        \subfiguretitle{(c)}
        \includegraphics[height=4.1cm]{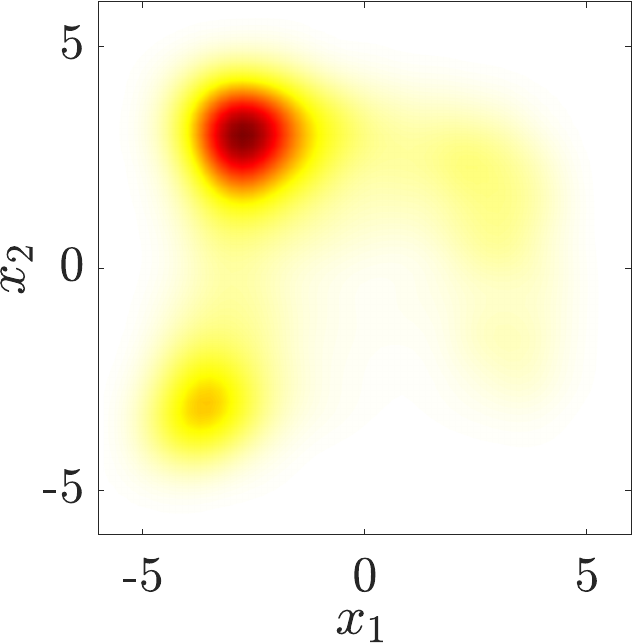}
    \end{minipage}
    \caption{(a)~Visualization of the Himmelblau potential comprising two separate wells in the left half plane and two partially merged wells in the right half plane. The blue line represents a single non-equilibrated trajectory. (b)~Initial density given by a kernel density estimate computed from 5000 initial conditions sampled from a Gaussian distribution with randomly generated center and bandwidth. (c)~Estimate of the density at time $ \tau $. The initial density spreads to the wells of the Himmelblau potential, but has clearly not reached the stationary distribution yet.}
    \label{fig:Himmelblau system}
\end{figure}

As a simple example, we consider the two-dimensional Himmelblau potential
\begin{equation*}
    W(x) = (x_1^2 + x_2 - 11)^2 + (x_1 + x_2^2 - 7)^2,
\end{equation*}
shown in Figure~\ref{fig:Himmelblau system}\ts(a), and choose the inverse temperature $ \beta = \frac{2}{100} $ and lag time $ \tau = \frac{1}{10} $, see also~\cite{SKH23}. Trajectories will typically spend a long time in one well before transitioning to one of the other wells. Since $ \beta $ is quite small and the two wells in the right half plane close to each other, they can be considered to form one large well. We would hence expect three dominant eigenvalues close to one, indicating the existence of three metastable sets, followed by a spectral gap. In order to generate training data for functional DMD, we sample $ 5000 $ points from a Gaussian distribution with randomly selected center and bandwidth and then apply kernel density estimation, described in Example~\ref{ex:KDE and grid discretization}, to construct the density at $ t = 0 $, see Figure~\ref{fig:Himmelblau system}\ts(b). The sampled points are mapped forward using the overdamped Langevin equation to obtain the time-lagged points, from which we estimate the density at $ t = \tau $, as illustrated in Figure~\ref{fig:Himmelblau system}\ts(c). \exampleSymbol
\end{example}

Alternatively, we could assume that we have access to densities at different time points, either obtained by applying a black-box PDE solver or by repeatedly measuring the densities of particles. The goal here is to illustrate the flexibility and versatility of functional DMD and in particular the kernel-based formulation. Gaussian mixture models might provide a more data-efficient alternative.

\subsubsection{Detecting invariant densities and metastable states}

Given only estimates of the densities, functional DMD allows us to approximate dominant eigenfunctions of the Perron--Frobenius operator, which in turn can be used to identify the invariant density, metastable sets, and also the potential itself since $ W \sim - \frac{1}{\beta} \log(\pi) $. Additionally, the eigenvalues contain information about the associated timescales and the number of metastable sets.

\begin{example}

\begin{figure}
    \centering
    \begin{minipage}[t]{0.32\linewidth}
        \centering
        \subfiguretitle{(a) $ \lambda_1 \approx 1 $}
        \includegraphics[height=4.1cm]{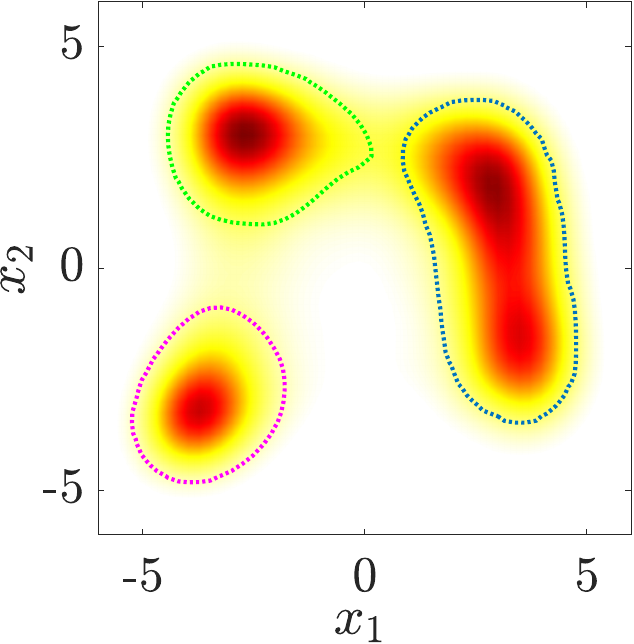}
    \end{minipage}
    \begin{minipage}[t]{0.32\linewidth}
        \centering
        \subfiguretitle{(b) $ \lambda_2 \approx 0.851 $}
        \includegraphics[height=4.1cm]{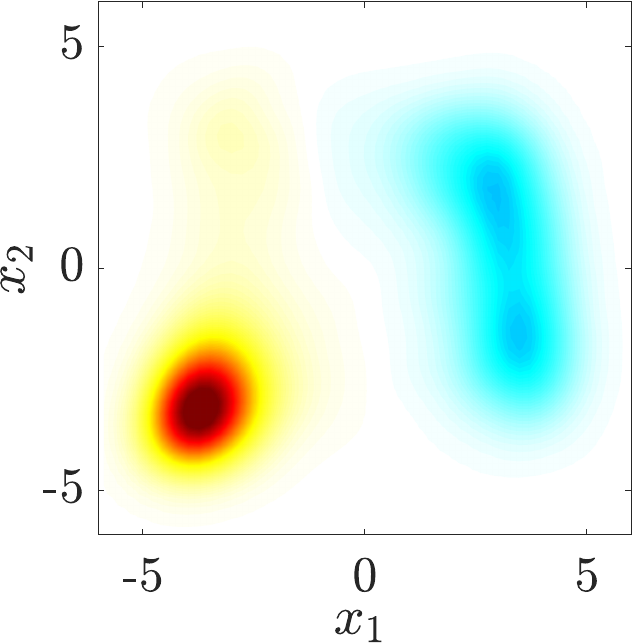}
    \end{minipage}
    \begin{minipage}[t]{0.32\linewidth}
        \centering
        \subfiguretitle{(c) $ \lambda_3 \approx 0.695 $}
        \includegraphics[height=4.1cm]{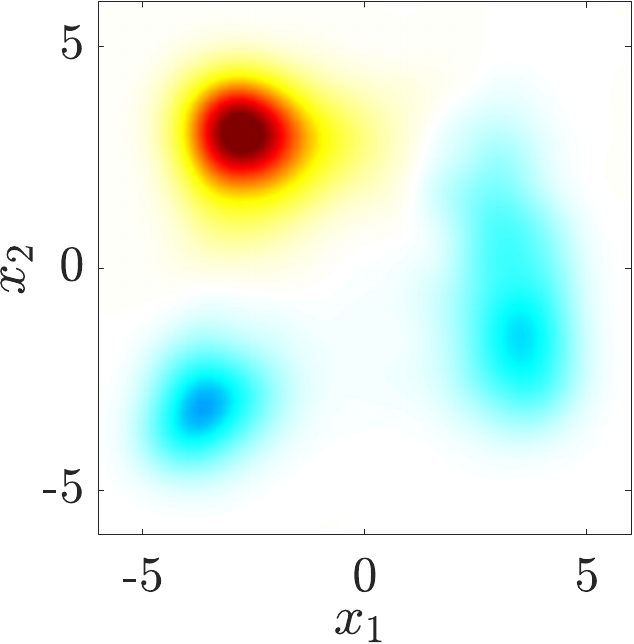}
    \end{minipage}
    \caption{(a) Estimated invariant density. The dotted lines mark the three identified metastable sets. (b)~The second eigenfunction separates the well in the lower left corner from the others. (c)~The third eigenfunction distinguishes between the well in the upper left corner and the other wells. Combining this information allows us to extract the three metastable sets shown in (a).}
    \label{fig:Himmelblau results}
\end{figure}

Let us consider again the Himmelblau system introduced in Example~\ref{ex:Himmelblau}. We collect training data by randomly generating five initial densities (Gaussians centered at uniformly sampled points in $ [-5, 5]^2 $ with bandwidth one), for which we compute the corresponding densities at times $ \tau $, $ 2 \ts \tau $, and $ 3 \ts \tau $, resulting in three time-lagged pairs, i.e., overall $ m = 15 $ functions $ u_i $ and~$ v_i $. We then compute the Gram matrices $ C_{uu}, C_{uv} \in \R^{15 \times 15} $ as described in Example~\ref{ex:KDE and grid discretization}, choosing a normalized Gaussian kernel
\begin{equation*}
    k(x, x') = \big(2 \ts \pi \ts \sigma^2\big)^{-\frac{d}{2}} \ts \exp\left(-\frac{\norm{x - x'}^2}{2 \ts \sigma^2}\right)
\end{equation*}
with bandwidth $ \sigma = \frac{1}{2} $ for the kernel density estimation, where $ d = 2 $ is the dimension of the system. Computing the eigenvalues of the matrix $ A $ reveals that there are indeed three metastable sets. The resulting projected functional DMD eigenfunctions are shown in Figure~\ref{fig:Himmelblau results}. We extract the metastable sets using the \emph{sparse eigenbasis approximation} (SEBA) algorithm~\cite{FRS19}. In order to analyze how accurate the computed eigenvalues are, we apply standard EDMD \cite{WKR15, KKS16} with a dictionary containing monomials of order up to eight directly to the SDE data.\!\footnote{A more suitable comparison would be to apply kernel EDMD \cite{WRK15, KSM19}. However, the data set contains $ 15 \times 5\ts000 $ points so that the resulting kernel matrices would be $ 75\ts000 $-dimensional.} We then obtain the eigenvalues $ \widehat{\lambda}_1 \approx 1 $, $ \widehat{\lambda}_2 \approx 0.854 $, and $ \widehat{\lambda}_3 \approx 0.699 $, which are close to the projected functional DMD estimates. \exampleSymbol
\end{example}

\begin{remark}
An extension of Ulam's method that approximates the transition kernel of the Perron--Frobenius operator with the aid of kernel density estimates instead of piecewise constant functions was proposed in \cite{SFB24}. We, on the other hand, represent the functional time-series data in terms of kernel density estimates and then approximate the operator itself using functional DMD.
\end{remark}

Functional DMD is conceptually different from conventional DMD-based methods in that it does not work with trajectory data generated by an ODE or SDE, but rather directly with functions whose evolution is described by a linear operator. In the example above, the densities are estimated from SDE data. One major difference though is that we do not need to be able to track individual trajectories but only aggregated properties of ensembles of particles such as their distributions.

\subsection{Koopman--von Neumann mechanics}

In addition to the classical transfer operators that propagate observables or probability densities, there exists a less well-known quantum physics-inspired formulation of classical mechanics, which describes the evolution of dynamical systems in terms of wavefunctions---the so-called Koopman--von Neumann equation \cite{Mauro02, Klein2018, Joseph20, KNG26}. We will now consider autonomous ordinary differential equations of the form $ \dot{x} = b(x) $, where $ b \colon \Omega \to \R^d $ and $ \Omega \subseteq \R^d $. If the domain is bounded, we will assume that $ \Omega $ is forward-invariant under the flow, which means that trajectories cannot leave the domain \cite{MauMez16}.

\subsubsection{The Koopman--von Neumann generator}

The evolution of observables $ f $, probability densities $ \rho $, and wavefunctions $ \psi $ can be described by the partial differential equations
\begin{equation*}
    \frac{\raisebox{-2pt}{$\partial$}}{\partial t}f(x, t) = \mathcal{L} f(x, t),  \qquad
    \frac{\raisebox{-2pt}{$\partial$}}{\partial t}\rho(x, t) = \mathcal{L}^* \rho(x, t), \qquad
    \frac{\raisebox{-2pt}{$\partial$}}{\partial t} \psi(x, t) = \mathcal{L}^\circ \psi(x, t),
\end{equation*}
where $ \mathcal{L} $ is the Koopman generator, $ \mathcal{L}^* $ the Perron--Frobenius generator, and $ \mathcal{L}^\circ $ the Koopman--von Neumann generator, defined by
\begin{equation*}
    \mathcal{L} f = b \vdot \nabla f,  \qquad
    \mathcal{L}^* \rho = -b \vdot \nabla \rho - \div(b) \ts \rho, \qquad
    \mathcal{L}^\circ \ts \psi = -b \vdot \nabla \psi - \tfrac{1}{2}\div(b) \ts \psi.
\end{equation*}
One main advantage of the Koopman--von Neumann generator is that it is skew-adjoint, which implies that the associated propagator for a fixed lag time $ \tau $ is unitary. Projecting this propagator onto a finite-dimensional state space, we obtain a unitary matrix, which can be represented by a quantum circuit. The Koopman--von Neumann framework can thus potentially be used to simulate classical dynamical systems on quantum computers.

\subsubsection{Linear systems and invariant subspaces}

It has been shown in \cite{KNG26} that for linear ordinary differential equations we can construct invariant subspaces, provided that a suitable conservation law can be found. Assume that $ \mathcal{L} \varphi_0 = 0 $ and $ \varphi_0 $ vanishes on $ \partial \Omega $, then the space
\begin{equation*}
    \mathbb{M} = \mspan\big\{ \varphi_0(x) \ts x^p : \abs{p} \le r \},
\end{equation*}
where $ p = (p_1, \dots, p_d) \in \mathbb{N}_0^d $ is a multi-index and $ \abs{p} = \sum_{i=1}^d p_i $, is invariant under the action of the three operators introduced above. In this case, it is possible to compute eigenvalues and eigenfunctions analytically. We will use such a system as a benchmark problem to assess the accuracy of functional DMD for unitary operators.

\begin{example} \label{ex:linear ODE}
Let us consider the system of linear ordinary differential equations $ \dot{x} = B \ts x $, with
\begin{equation*}
    B =
    \begin{bmatrix}
        0 & -1 & -1 \\
        1 &  0 & -1 \\
        1 &  1 &  0
    \end{bmatrix}.
\end{equation*}
We choose $ \varphi_0(x) = x_1^2 + x_2^2 + x_3^2 - 1 $ and define $ \Omega = \big\{ x_1^2 + x_2^2 + x_3^2 < 1 \big\} $ so that $ \mathcal{L} \varphi_0 = 0 $ and $ \varphi_0(x) = 0 $ on $ \partial \Omega $. The eigenvalues of the generator $ \mathcal{L} $ are determined by the eigenvalues of the matrix $ B $ and the eigenfunctions by the left eigenvectors. We obtain
\begin{alignat*}{2}
    \mu_1 &= \hspace{2em} 0, &\qquad  \varphi_1(x) &= x_1 - x_2 + x_3, \\
    \mu_2 &= -\mathrm{i} \ts \sqrt{3}, &\qquad \varphi_2(x) &= \big(\!-1 + \mathrm{i} \ts \sqrt{3}\big) x_1 + \big(1 + \mathrm{i} \ts \sqrt{3}\big) x_2 + 2 \ts x_3, \\
    \mu_3 &= \phantom{+} \mathrm{i} \ts \sqrt{3}, & \varphi_3(x) &= \big(\!-1 - \mathrm{i} \ts \sqrt{3}\big) x_1 + \big(1 - \mathrm{i} \ts \sqrt{3}\big) x_2 + 2 \ts x_3.
\end{alignat*}
Additional eigenfunctions can be constructed by computing products and powers of the principal eigenfunctions, i.e., $ \varphi_{(\ell_1, \ell_2, \ell_3)}(x) := \varphi_0(x) \ts \varphi_1(x)^{\ell_1} \varphi_2(x)^{\ell_2} \varphi_3(x)^{\ell_3} $ is an eigenfunction associated with the eigenvalue $ \mu_{(\ell_1, \ell_2, \ell_3)} = \ell_1 \ts \mu_1 + \ell_2 \ts \mu_2 + \ell_3 \ts \mu_3 = \mathrm{i} \ts \sqrt{3}(\ell_3 - \ell_2) $. Note in particular that different combinations of $ \ell_1 $, $ \ell_2 $, and $ \ell_3 $ correspond to the same eigenvalue. The conservation law $ \varphi_0 $ is used to enforce the Dirichlet boundary condition. The constructed functions, some of which are shown in Figure~\ref{fig:KvN example}\ts(a), are also eigenfunctions of the Perron--Frobenius and Koopman--von Neumann generator, corresponding to the eigenvalue $ -\mu_{(\ell_1, \ell_2, \ell_3)} $. \exampleSymbol
\end{example}

\subsubsection{Spectral decomposition and forecasting}

We are interested in estimating eigenvalues and eigenfunctions of the Koopman--von Neumann propagator from functional time-series data.

\begin{example}
\begin{figure}
    \centering
    \begin{minipage}[t]{0.45\linewidth}
        \centering
        \subfiguretitle{(a)}
        \vspace*{0.5ex}
        \includegraphics[height=6.0cm]{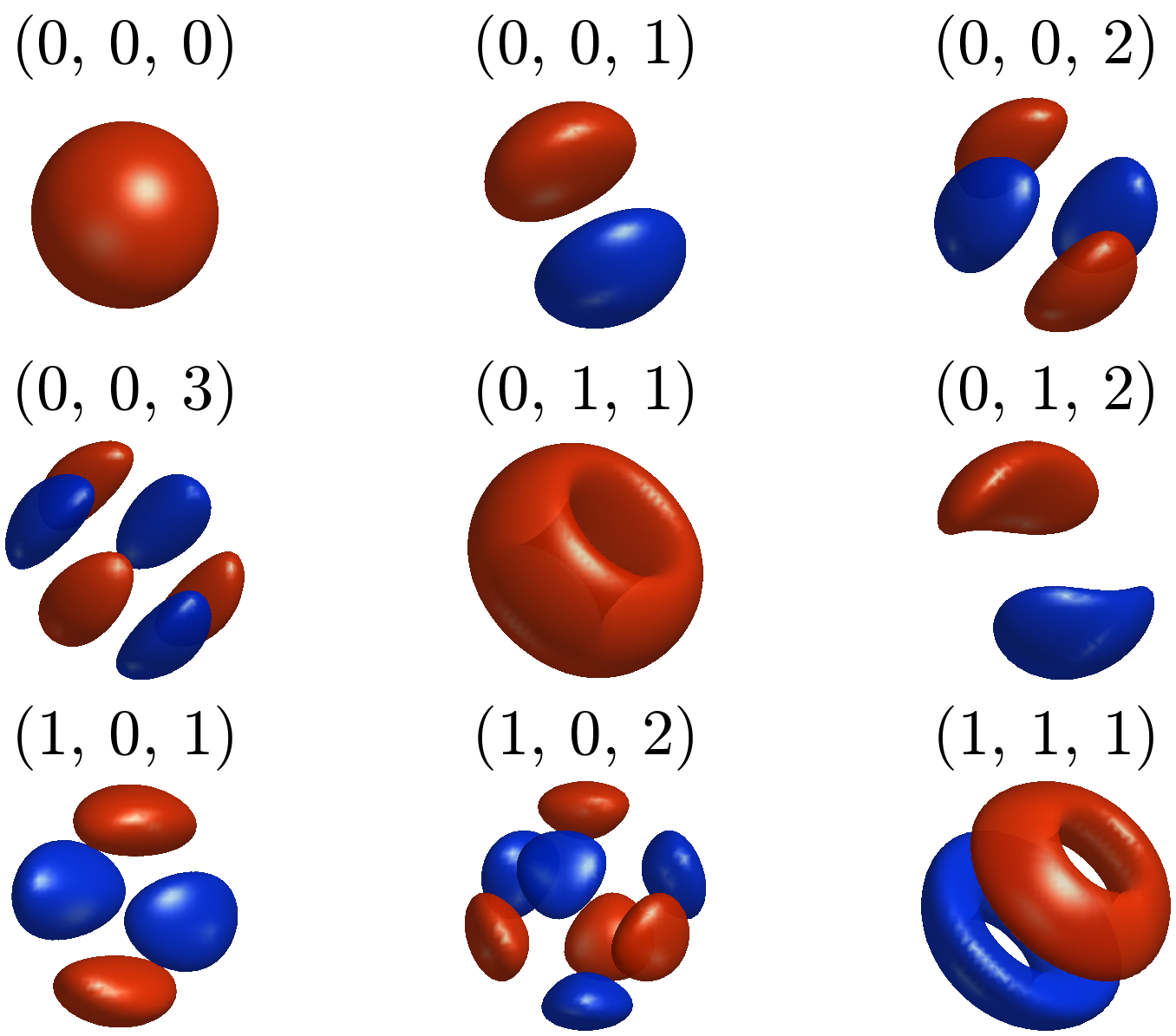}
    \end{minipage}
    \begin{minipage}[t]{0.45\linewidth}
        \centering
        \subfiguretitle{(b)}
        \vspace*{0.5ex}
        \includegraphics[height=6.2cm]{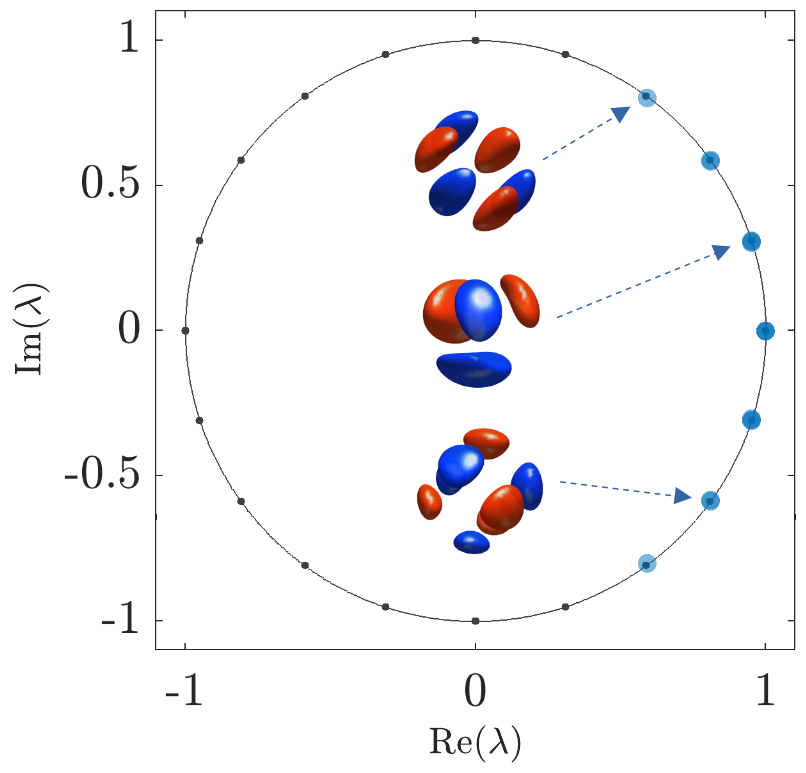}
    \end{minipage}
    \caption{(a) Analytically computed eigenfunctions of the Koopman--von Neumann generator associated with the linear system. The tuples $ (\ell_1, \ell_2, \ell_3) $ are the corresponding ``quantum numbers''. (b) Numerically (blue) and analytically (black) computed eigenvalues. A darker blue implies a higher multiplicity. Since the eigenspaces are not one-dimensional, eigenvectors and hence eigenfunctions are not uniquely determined. That is, the numerically computed eigenfunctions can be superpositions of the analytically computed eigenfunctions associated with the same eigenvalue.}
    \label{fig:KvN example}
\end{figure}

We apply exact functional DMD to the system introduced in Example~\ref{ex:linear ODE}. In order to generate training data, we simulate the Koopman--von Neumann equation (restricted to the $ 20 $-dimensional invariant subspace with $ r = 3 $) for five different randomly generated initial conditions and take three snapshot pairs with lag time $ \tau = \frac{2 \ts \pi}{20 \ts \sqrt{3}} $ from each simulation so that $ m = 15 $.  The computed eigenvalues, shown in Figure~\ref{fig:KvN example}\ts(b), lie on the unit circle. We can obtain estimates of the eigenvalues of the Koopman--von Neumann generator by computing
\begin{equation*}
    \widehat{\mu}_\ell = \frac{\log(\lambda_\ell)}{\tau}.
\end{equation*}
The estimated generator eigenvalues are approximately $ 0 $, $ \pm \mathrm{i} \ts \sqrt{3} $, $ \pm \mathrm{i} \ts 2 \sqrt{3} $, and $ \pm \mathrm{i} \ts 3 \sqrt{3} $, with multiplicities $ 3 $, $ 3 $, $ 2$, and $ 1 $. A few select eigenfunctions are also shown in Figure~\ref{fig:KvN example}\ts(b). Increasing the size of the dictionary would allow us to detect more of the eigenfunctions shown in Figure~\ref{fig:KvN example}\ts(a). \exampleSymbol
\end{example}

The propagator for the Koopman--von Neumann generator is unitary. Since all eigenvalues lie on the unit circle, the eigenfunctions represent non-decaying periodic patterns with different frequencies. The numerically computed spectral properties are good approximations of the analytically determined eigenvalues and eigenfunctions and can again be used for predicting the evolution of the system.

\subsection{Kuramoto--Sivashinsky equation}

As a last benchmark problem, we consider a nonlinear partial differential equation, namely the Kuramoto--Sivashinsky equation in two dimensions, which for spatially periodic domains can be defined by
\begin{equation*}
\frac{\raisebox{-2pt}{$\partial$}}{\partial t} u(x, t) = - \Delta u(x,t) - \Delta^2
u(x,t) - \tfrac{1}{2} \norm{\nabla u(x,t)}^2.
\end{equation*}
Although the interpretation of the eigenfunctions will be less clear since we approximate the nonlinear right-hand side by a linear operator, we can nevertheless apply functional DMD to the data, assuming that the estimated operator still contains relevant information about the global dynamics.

\begin{example}
\begin{figure}
    \centering
    \begin{minipage}[t]{0.64\linewidth}
        \centering
        \subfiguretitle{(a)}
        \includegraphics[height=5.8cm]{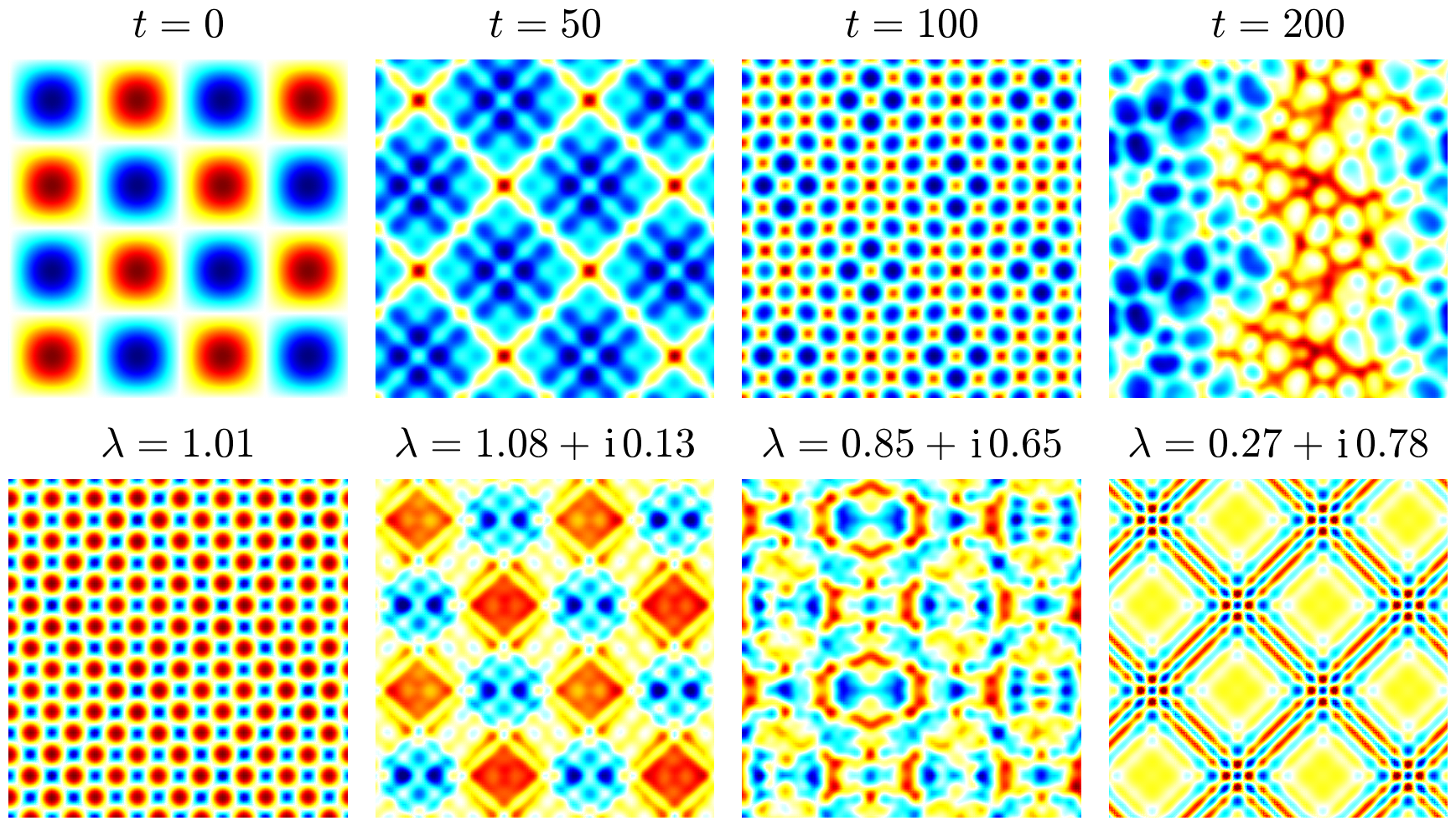}
    \end{minipage}
    \begin{minipage}[t]{0.35\linewidth}
        \centering
        \subfiguretitle{(b)}
        \vspace*{2.4ex}
        \includegraphics[height=5.5cm]{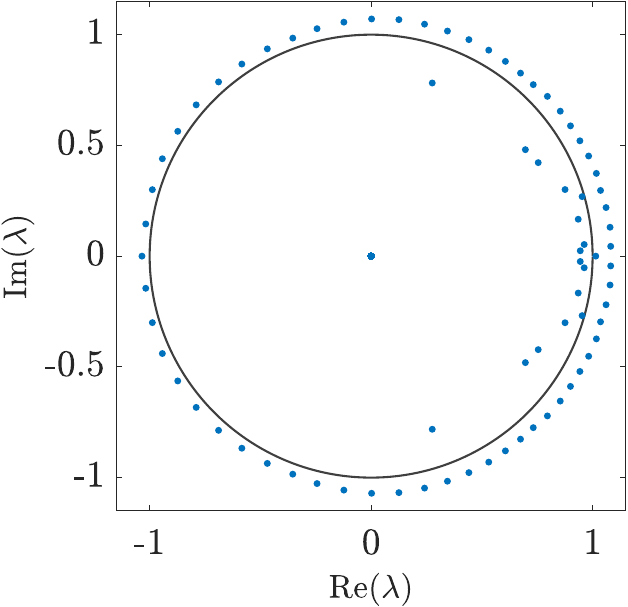}
    \end{minipage}
    \caption{(a) Top row: Solutions of the Kuramoto--Sivashinsky equation at different times $ t $. Bottom row: A few select numerically computed eigenfunctions of the estimated linear operator. (b) Spectrum of the operator. The eigenvalues do not lie on the unit circle in this case.}
    \label{fig:KS example}
\end{figure}

We choose the domain $ \Omega = (0, 30 \ts \pi) \times (0, 30 \ts \pi) $, periodic boundary conditions, a Fourier basis comprising $ 128 \times 128 $ functions, and the initial condition $ u_0(x) = \sin\big(\frac{2}{15} x_1\big) \sin\big(\frac{2}{15} x_2\big) $ and then simulate the Kuramoto--Sivashinsky equation from $ t = 0 $ to $ t = 200 $ using \emph{Shenfun} \cite{Shen94, Mortensen18}, a Python package containing spectral Galerkin methods for solving partial differential equations. We select $ \tau = 1 $ and $ t_i = (i - 1) \ts \tau $ so that we obtain $ 200 $ snapshots $ u_i $ and $ v_i $, a few of which are shown in the top row of Figure~\ref{fig:KS example}\ts(a), and then apply exact functional DMD. Four of the resulting eigenfunctions are displayed in the bottom row of Figure~\ref{fig:KS example}\ts(a). The DMD eigenvalues are shown in Figure~\ref{fig:KS example}\ts(b). \exampleSymbol
\end{example}

DMD or, equivalently, time-lagged independent component analysis \cite{MS94, PPGDN13} are often used as a preprocessing step in order to project high-dimensional data onto low-dimensional subspaces in such a way that the slowest timescales of the system are preserved.\!\footnote{Note that this is different from a PCA-based projection, which maximizes the variance of the projected data and does not explicitly take the temporal ordering of the snapshots into account.} A few modes or time-lagged independent components typically already capture the characteristic behavior---such as metastability---of complex multiscale systems. In the same way, we can now project infinite-dimensional data onto the slowly evolving dynamics using functional DMD. In the projected space, we can then, for instance, construct Markov state models or apply manifold learning techniques.

\section{Conclusion}
\label{sec:Conclusion}

We derived, analyzed, and compared two different DMD variants that can be used to learn infinite-dimensional dynamical systems and to identify dominant spatiotemporal patterns, namely \emph{projected functional DMD} and \emph{exact functional DMD}. Instead of estimating finite-dimensional matrices from vector-valued observations, we learn finite-rank operators from functional data. We have shown that by discretizing the domain and evaluating the training functions in select grid points, we obtain the well-known classical DMD algorithms as special cases. Although DMD has of course already been applied to infinite-dimensional problems, e.g., partial differential equations or integro-differential equations, the systems were typically first implicitly discretized and turned into finite-dimensional problems. We have in particular shown how functional DMD can be used to approximate infinite-dimensional transfer operators associated with ODEs, SDEs, and graphons. This is different from the typically considered particle-based point of view, where we assume that individual trajectories are given. By working directly with observables, densities, or wavefunctions, there is no need to track particles. This is, for instance, advantageous if we cannot distinguish between particles and have only density estimates.

Our DMD variants provide an abstract and flexible framework for learning operators from functional data, allowing us to work with arbitrary Hilbert spaces. Instead of relying on the standard Euclidean inner product, it is possible to leverage higher-order numerical integration techniques or to derive kernel-based methods. We have demonstrated that accurate estimates of dominant eigenfunctions can be obtained from just a few observations, using simple guiding examples such as the heat equation as well as random-walk processes on graphons, Langevin dynamics, Koopman--von Neumann mechanics, and the Kuramoto--Sivashinsky equation. Functional DMD could also shed light on the convergence of classical DMD algorithms if we consider the limit of infinitely many grid points. An open question is what happens in the infinite number of snapshots limit. Another issue might be the unavoidable curse of dimensionality: How can we efficiently represent or decompose functions if the state space is high-dimensional? One possibility would be to consider kernels defined on Hilbert spaces~\cite{WD22}, functional tensor trains \cite{GKM19}, or functional neural networks~\cite{RR23}. An interesting avenue for future research would also be to extend the proposed algorithms to nonlinear infinite-dimensional systems and to compare the resulting methods with generalized EDMD \cite{MauGon16}. Just like other DMD-type algorithms, functional DMD will in general produce spurious eigenvalues. A score that measures how trustworthy eigenvalues are was proposed in \cite{Colbrook23} and could be extended to the functional DMD setting. Incorporating domain knowledge into the learning process---e.g., conservation laws, symmetries, or the fact that the operator is self-adjoint or unitary---could also improve the accuracy and efficiency of functional DMD.

\section*{Data availability}

The code that supports the findings presented in this paper is available at \href{https://github.com/sklus/d3s/}{github.com/sklus/d3s/}.

\section*{No-AI disclaimer}

The authors did not use generative AI or AI-assisted technologies in their research or preparation of this manuscript.

\section*{Acknowledgments}

We thank Alex Mauroy and Stefanie Winkelmann for interesting discussions about graphons, interacting particle systems, and operator learning. S.K.\ was funded by a Leverhulme Trust Research Fellowship. E.I.\ was supported by the EPSRC Centre for Doctoral Training in Mathematical Modelling, Analysis and Computation (MAC-MIGS) funded by the UK Engineering and Physical Sciences Research Council (grant EP/S023291/1).

\bibliographystyle{unsrturl}
\bibliography{fDMD}

\begin{thebibliography}{10}

\bibitem{LNVR07}
D.~J. Levitin, R.~L. Nuzzo, B.~W. Vines, and J.~O. Ramsay.
\newblock Introduction to functional data analysis.
\newblock {\em Canadian Psychology}, 48(3):135--155, 2007.

\bibitem{Shang14}
H.~L. Shang.
\newblock A survey of functional principal component analysis.
\newblock {\em AStA Advances in Statistical Analysis}, 98(2):121--142, 2014.

\bibitem{EH15}
R.~Eubank and T.~Hsing.
\newblock {\em Theoretical Foundations of Functional Data Analysis with an
  Introduction to Linear Operators}.
\newblock Wiley, Chichester, 1st edition, 2015.

\bibitem{WCM16}
J.-L. Wang, J.-M. Chiou, and H.-G. Müller.
\newblock Functional data analysis.
\newblock {\em Annual Review of Statistics and Its Application}, 3:257--295,
  2016.
\newblock \href {https://doi.org/10.1146/annurev-statistics-041715-033624}
  {\path{doi:10.1146/annurev-statistics-041715-033624}}.

\bibitem{Schmid10}
P.~J. Schmid.
\newblock Dynamic mode decomposition of numerical and experimental data.
\newblock {\em Journal of Fluid Mechanics}, 656:5--28, 2010.
\newblock \href {https://doi.org/10.1017/S0022112010001217}
  {\path{doi:10.1017/S0022112010001217}}.

\bibitem{TRLBK14}
J.~H. Tu, C.~W. Rowley, D.~M. Luchtenburg, S.~L. Brunton, and J.~N. Kutz.
\newblock On dynamic mode decomposition: {T}heory and applications.
\newblock {\em Journal of Computational Dynamics}, 1(2), 2014.

\bibitem{WKR15}
M.~O. Williams, I.~G. Kevrekidis, and C.~W. Rowley.
\newblock A data-driven approximation of the {K}oopman operator: Extending
  dynamic mode decomposition.
\newblock {\em Journal of Nonlinear Science}, 25(6):1307--1346, 2015.
\newblock \href {https://doi.org/10.1007/s00332-015-9258-5}
  {\path{doi:10.1007/s00332-015-9258-5}}.

\bibitem{KNPNCS20}
S.~Klus, F.~N\"uske, S.~Peitz, J.-H. Niemann, C.~Clementi, and C.~Sch\"utte.
\newblock Data-driven approximation of the {K}oopman generator: {M}odel
  reduction, system identification, and control.
\newblock {\em Physica D: Nonlinear Phenomena}, 406:132416, 2020.
\newblock \href {https://doi.org/10.1016/j.physd.2020.132416}
  {\path{doi:10.1016/j.physd.2020.132416}}.

\bibitem{KNKWKSN18}
S.~Klus, F.~N\"uske, P.~Koltai, H.~Wu, I.~Kevrekidis, C.~Sch\"utte, and
  F.~No\'e.
\newblock Data-driven model reduction and transfer operator approximation.
\newblock {\em Journal of Nonlinear Science}, 28:985--1010, 2018.
\newblock \href {https://doi.org/10.1007/s00332-017-9437-7}
  {\path{doi:10.1007/s00332-017-9437-7}}.

\bibitem{Ko31}
B.~O. Koopman.
\newblock Hamiltonian systems and transformations in {H}ilbert space.
\newblock {\em Proceedings of the National Academy of Sciences}, 17(5):315,
  1931.
\newblock \href {https://doi.org/10.1073/pnas.17.5.315}
  {\path{doi:10.1073/pnas.17.5.315}}.

\bibitem{KvN32}
B.~O. Koopman and J.~von Neumann.
\newblock Dynamical systems of continuous spectra.
\newblock {\em Proceedings of the National Academy of Sciences of the United
  States of America}, 18(3):255--263, 1932.

\bibitem{LaMa94}
A.~Lasota and M.~C. Mackey.
\newblock {\em Chaos, fractals, and noise: Stochastic aspects of dynamics},
  volume~97 of {\em Applied Mathematical Sciences}.
\newblock Springer, New York, 2nd edition, 1994.

\bibitem{Mezic05}
I.~Mezi{\'{c}}.
\newblock Spectral properties of dynamical systems, model reduction and
  decompositions.
\newblock {\em Nonlinear Dynamics}, 41(1):309--325, 2005.
\newblock \href {https://doi.org/10.1007/s11071-005-2824-x}
  {\path{doi:10.1007/s11071-005-2824-x}}.

\bibitem{BMM12}
M.~Budi{\v s}i{\'c}, R.~Mohr, and I.~Mezi{\'c}.
\newblock Applied {K}oopmanism.
\newblock {\em Chaos: An Interdisciplinary Journal of Nonlinear Science},
  22(4), 2012.
\newblock \href {https://doi.org/10.1063/1.4772195}
  {\path{doi:10.1063/1.4772195}}.

\bibitem{SS13}
C.~Sch\"utte and M.~Sarich.
\newblock {\em Metastability and Markov State Models in Molecular Dynamics:
  Modeling, Analysis, Algorithmic Approaches}.
\newblock Number~24 in Courant Lecture Notes. American Mathematical Society,
  2013.

\bibitem{MauGon16}
A.~Mauroy and J.~Goncalves.
\newblock Linear identification of nonlinear systems: {A} lifting technique
  based on the {K}oopman operator.
\newblock In {\em 2016 IEEE 55th Conference on Decision and Control (CDC)},
  pages 6500--6505, 2016.
\newblock \href {https://doi.org/10.1109/CDC.2016.7799269}
  {\path{doi:10.1109/CDC.2016.7799269}}.

\bibitem{KKS16}
S.~Klus, P.~Koltai, and C.~Sch{\"u}tte.
\newblock On the numerical approximation of the {P}erron--{F}robenius and
  {K}oopman operator.
\newblock {\em Journal of Computational Dynamics}, 3(1):51--79, 2016.
\newblock \href {https://doi.org/10.3934/jcd.2016003}
  {\path{doi:10.3934/jcd.2016003}}.

\bibitem{MMS20}
A.~Mauroy, I.~Mezi{\'c}, and Y.~Susuki, editors.
\newblock {\em The Koopman Operator in Systems and Control: Concepts,
  Methodologies, and Applications}.
\newblock Lecture Notes in Control and Information Sciences. Springer
  International Publishing, 2020.
\newblock \href {https://doi.org/10.1007/978-3-030-35713-9}
  {\path{doi:10.1007/978-3-030-35713-9}}.

\bibitem{WuNo20}
H.~Wu and F.~No{\'e}.
\newblock Variational approach for learning {M}arkov processes from time series
  data.
\newblock {\em Journal of Nonlinear Science}, 30:33--66, 2020.
\newblock \href {https://doi.org/10.1007/s00332-019-09567-y}
  {\path{doi:10.1007/s00332-019-09567-y}}.

\bibitem{KNG26}
S.~Klus, F.~Nüske, and P.~Gelß.
\newblock Numerical approximation of the {K}oopman--von {N}eumann equation:
  Operator learning and quantum computing, 2026.
\newblock \href {http://arxiv.org/abs/2604.08414} {\path{arXiv:2604.08414}}.

\bibitem{KD24}
S.~Klus and N.~Djurdjevac Conrad.
\newblock Dynamical systems and complex networks: A {K}oopman operator
  perspective.
\newblock {\em Journal of Physics: Complexity}, 5(4):041001, 2024.
\newblock \href {https://doi.org/10.1088/2632-072X/ad9e60}
  {\path{doi:10.1088/2632-072X/ad9e60}}.

\bibitem{Colbrook24}
M.~J. Colbrook.
\newblock The multiverse of dynamic mode decomposition algorithms.
\newblock In Siddhartha Mishra and Alex Townsend, editors, {\em Numerical
  Analysis Meets Machine Learning}, volume~25 of {\em Handbook of Numerical
  Analysis}, pages 127--230. Elsevier, 2024.
\newblock \href {https://doi.org/https://doi.org/10.1016/bs.hna.2024.05.004}
  {\path{doi:https://doi.org/10.1016/bs.hna.2024.05.004}}.

\bibitem{Mauroy21}
A.~Mauroy.
\newblock Koopman operator framework for spectral analysis and identification
  of infinite-dimensional systems.
\newblock {\em Mathematics}, (19), 2021.
\newblock \href {https://doi.org/10.3390/math9192495}
  {\path{doi:10.3390/math9192495}}.

\bibitem{OTY25}
M.~Oprea, A.~Townsend, and Y.~Yang.
\newblock The distributional {K}oopman operator for random dynamical systems.
\newblock {\em Mathematics of Control, Signals, and Systems}, 37:769–798,
  2025.
\newblock \href {https://doi.org/10.1007/s00498-025-00423-x}
  {\path{doi:10.1007/s00498-025-00423-x}}.

\bibitem{KG20}
A.~Karimi and T.~T. Georgiou.
\newblock Data-driven approximation of the {P}erron--{F}robenius operator using
  the {W}asserstein metric.
\newblock {\em IFAC-PapersOnLine}, 55(30):341--346, 2022.
\newblock 25th International Symposium on Mathematical Theory of Networks and
  Systems MTNS 2022.
\newblock \href {https://doi.org/10.1016/j.ifacol.2022.11.076}
  {\path{doi:10.1016/j.ifacol.2022.11.076}}.

\bibitem{DHZ16}
M.~Dellnitz, M.~Hessel-Von Molo, and A.~Ziessler.
\newblock On the computation of attractors for delay differential equations.
\newblock {\em Journal of Computational Dynamics}, 3(1):93--112, 2016.
\newblock \href {https://doi.org/10.3934/jcd.2016005}
  {\path{doi:10.3934/jcd.2016005}}.

\bibitem{ZDG18}
A.~Ziessler, M.~Dellnitz, and R.~Gerlach.
\newblock The numerical computation of unstable manifolds for infinite
  dimensional dynamical systems by embedding techniques.
\newblock {\em SIAM Journal on Applied Dynamical Systems}, 18:1265--1292, 2018.
\newblock \href {https://doi.org/10.1137/18m1204395}
  {\path{doi:10.1137/18m1204395}}.

\bibitem{PHNPSW25}
S.~Peitz, H.~Harder, F.~N\"uske, F.~Philipp, M.~Schaller, and K.~Worthmann.
\newblock Equivariance and partial observations in {K}oopman operator theory
  for partial differential equations.
\newblock {\em Journal of Computational Dynamics}, 12(2):305--324, 2025.
\newblock \href {https://doi.org/10.3934/jcd.2024035}
  {\path{doi:10.3934/jcd.2024035}}.

\bibitem{RBPK17}
S.~H. Rudy, S.~L. Brunton, J.~L. Proctor, and J.~N. Kutz.
\newblock Data-driven discovery of partial differential equations.
\newblock {\em Science Advances}, 3(4):e1602614, 2017.
\newblock \href {https://doi.org/10.1126/sciadv.1602614}
  {\path{doi:10.1126/sciadv.1602614}}.

\bibitem{Pazy83}
A.~Pazy.
\newblock {\em Semigroups of linear operators and applications to partial
  differential equations}.
\newblock Springer, 1983.

\bibitem{EHN96}
H.~Engl, M.~Hanke, and A.~Neubauer.
\newblock {\em Regularization of Inverse Problems}.
\newblock Kluwer, Dordrecht, 1996.

\bibitem{MSKS20}
M.~Mollenhauer, I.~Schuster, S.~Klus, and C.~Sch\"utte.
\newblock Singular value decomposition of operators on reproducing kernel
  {H}ilbert spaces.
\newblock In {\em Advances in Dynamics, Optimization and Computation}, pages
  109--131, Cham, 2020. Springer.
\newblock \href {https://doi.org/10.1007/978-3-030-51264-4_5}
  {\path{doi:10.1007/978-3-030-51264-4_5}}.

\bibitem{LS06}
L.~Lovász and B.~Szegedy.
\newblock Limits of dense graph sequences.
\newblock {\em Journal of Combinatorial Theory, Series B}, 96(6):933--957,
  2006.
\newblock \href {https://doi.org/10.1016/j.jctb.2006.05.002}
  {\path{doi:10.1016/j.jctb.2006.05.002}}.

\bibitem{Janson13}
S.~Janson.
\newblock {\em Graphons, cut norm and distance, couplings and rearrangements},
  volume~4 of {\em New York Journal of Mathematics}.
\newblock State University of New York, University at Albany, Albany, NY, 2013.

\bibitem{PLC21}
J.~Petit, R.~Lambiotte, and T.~Carletti.
\newblock Random walks on dense graphs and graphons.
\newblock {\em SIAM Journal on Applied Mathematics}, 81(6):2323--2345, 2021.
\newblock \href {https://doi.org/10.1137/20M1339246}
  {\path{doi:10.1137/20M1339246}}.

\bibitem{BPS22}
B.~Bonnet, N.~{Pouradier Duteil}, and M.~Sigalotti.
\newblock Consensus formation in first-order graphon models with time-varying
  topologies.
\newblock {\em Mathematical Models and Methods in Applied Sciences},
  32(11):2121--2188, 2022.
\newblock \href {https://doi.org/10.1142/S0218202522500518}
  {\path{doi:10.1142/S0218202522500518}}.

\bibitem{KB26}
S.~Klus and J.~J. Bramburger.
\newblock Learning graphons from data: Random walks, transfer operators, and
  spectral clustering.
\newblock {\em IEEE Transactions on Signal Processing}, 74:1477--1490, 2026.
\newblock \href {https://doi.org/10.1109/TSP.2026.3682885}
  {\path{doi:10.1109/TSP.2026.3682885}}.

\bibitem{Davies82a}
E.~B. Davies.
\newblock Metastable states of symmetric {M}arkov semigroups {I}.
\newblock {\em Proceedings of the London Mathematical Society},
  s3-45(1):133--150, 1982.
\newblock \href {https://doi.org/10.1112/plms/s3-45.1.133}
  {\path{doi:10.1112/plms/s3-45.1.133}}.

\bibitem{Davies82b}
E.~B. Davies.
\newblock Metastable states of symmetric {M}arkov semigroups {II}.
\newblock {\em Journal of the London Mathematical Society}, s2-26(3):541--556,
  1982.
\newblock \href {https://doi.org/10.1112/jlms/s2-26.3.541}
  {\path{doi:10.1112/jlms/s2-26.3.541}}.

\bibitem{HS06}
W.~Huisinga and B.~Schmidt.
\newblock Metastability and dominant eigenvalues of transfer operators.
\newblock In {\em New Algorithms for Macromolecular Simulation}, volume~49 of
  {\em Lecture Notes in Computational Science and Engineering}, chapter~11,
  pages 167--182. Springer-Verlag, 2006.

\bibitem{Bovier16}
A.~Bovier and F.~{den Hollander}.
\newblock {\em Metastability: A Potential-Theoretic Approach}.
\newblock Grundlehren der mathematischen Wissenschaften. Springer International
  Publishing, 2016.

\bibitem{Luxburg07}
U.~von Luxburg.
\newblock A tutorial on spectral clustering.
\newblock {\em Statistics and Computing}, 17(4):395--416, 2007.
\newblock \href {https://doi.org/10.1007/s11222-007-9033-z}
  {\path{doi:10.1007/s11222-007-9033-z}}.

\bibitem{Pav14}
G.~A. Pavliotis.
\newblock {\em Stochastic Processes and Applications: Diffusion Processes, the
  Fokker--Planck and Langevin Equations}, volume~60 of {\em Texts in Applied
  Mathematics}.
\newblock Springer, 2014.

\bibitem{SKH23}
C.~Schütte, S.~Klus, and C.~Hartmann.
\newblock Overcoming the timescale barrier in molecular dynamics: {T}ransfer
  operators, variational principles and machine learning.
\newblock {\em Acta Numerica}, 32:517–673, 2023.
\newblock \href {https://doi.org/10.1017/S0962492923000016}
  {\path{doi:10.1017/S0962492923000016}}.

\bibitem{FRS19}
G.~Froyland, C.~P. Rock, and K.~Sakellariou.
\newblock Sparse eigenbasis approximation: Multiple feature extraction across
  spatiotemporal scales with application to coherent set identification.
\newblock {\em Communications in Nonlinear Science and Numerical Simulation},
  77:81--107, 2019.
\newblock \href {https://doi.org/10.1016/j.cnsns.2019.04.012}
  {\path{doi:10.1016/j.cnsns.2019.04.012}}.

\bibitem{WRK15}
M.~O. Williams, C.~W. Rowley, and I.~G. Kevrekidis.
\newblock A kernel-based method for data-driven {K}oopman spectral analysis.
\newblock {\em Journal of Computational Dynamics}, 2(2):247--265, 2015.
\newblock \href {https://doi.org/10.3934/jcd.2015005}
  {\path{doi:10.3934/jcd.2015005}}.

\bibitem{KSM19}
S.~Klus, I.~Schuster, and K.~Muandet.
\newblock Eigendecompositions of transfer operators in reproducing kernel
  {H}ilbert spaces.
\newblock {\em Journal of Nonlinear Science}, 2019.
\newblock \href {https://doi.org/10.1007/s00332-019-09574-z}
  {\path{doi:10.1007/s00332-019-09574-z}}.

\bibitem{SFB24}
S.~Surasinghe, J.~Fish, and E.~M. Bollt.
\newblock Learning transfer operators by kernel density estimation.
\newblock {\em Chaos: An Interdisciplinary Journal of Nonlinear Science},
  34(2):023126, 2024.
\newblock \href {https://doi.org/10.1063/5.0179937}
  {\path{doi:10.1063/5.0179937}}.

\bibitem{Mauro02}
D.~Mauro.
\newblock On {K}oopman--von {N}eumann waves.
\newblock {\em International Journal of Modern Physics A}, 17(09):1301--1325,
  2002.
\newblock \href {https://doi.org/10.1142/S0217751X02009680}
  {\path{doi:10.1142/S0217751X02009680}}.

\bibitem{Klein2018}
U.~Klein.
\newblock From {K}oopman--von {N}eumann theory to quantum theory.
\newblock {\em Quantum Studies: Mathematics and Foundations}, 5(2):219--227,
  2018.
\newblock \href {https://doi.org/10.1007/s40509-017-0113-2}
  {\path{doi:10.1007/s40509-017-0113-2}}.

\bibitem{Joseph20}
I.~Joseph.
\newblock Koopman--von {N}eumann approach to quantum simulation of nonlinear
  classical dynamics.
\newblock {\em Physical Review Research}, 2:043102, 2020.
\newblock \href {https://doi.org/10.1103/PhysRevResearch.2.043102}
  {\path{doi:10.1103/PhysRevResearch.2.043102}}.

\bibitem{MauMez16}
A.~Mauroy and I.~Mezi{\'{c}}.
\newblock Global stability analysis using the eigenfunctions of the {K}oopman
  operator.
\newblock {\em IEEE Transactions on Automatic Control}, 61(11):3356--3369,
  2016.
\newblock \href {https://doi.org/10.1109/TAC.2016.2518918}
  {\path{doi:10.1109/TAC.2016.2518918}}.

\bibitem{Shen94}
J.~Shen.
\newblock {Efficient Spectral-Galerkin Method I. Direct Solvers of Second- and
  Fourth-Order Equations Using Legendre Polynomials}.
\newblock {\em SIAM Journal on Scientific Computing}, 15(6):1489--1505, 1994.
\newblock \href {https://doi.org/10.1137/0915089} {\path{doi:10.1137/0915089}}.

\bibitem{Mortensen18}
M.~Mortensen.
\newblock Shenfun: High performance spectral {G}alerkin computing platform.
\newblock {\em Journal of Open Source Software}, 3(31):1071, 2018.
\newblock \href {https://doi.org/10.21105/joss.01071}
  {\path{doi:10.21105/joss.01071}}.

\bibitem{MS94}
L.~Molgedey and H.~G. Schuster.
\newblock Separation of a mixture of independent signals using time delayed
  correlations.
\newblock {\em Physical Review Letters}, 72:3634--3637, 1994.

\bibitem{PPGDN13}
G.~P{\'e}rez-Hern{\'a}ndez, F.~Paul, T.~Giorgino, G.~{De Fabritiis}, and
  F.~No{\'e}.
\newblock Identification of slow molecular order parameters for {M}arkov model
  construction.
\newblock {\em The Journal of Chemical Physics}, 139(1), 2013.

\bibitem{WD22}
G.~Wynne and A.~B. Duncan.
\newblock A kernel two-sample test for functional data.
\newblock {\em Journal of Machine Learning Research}, 23(73):1--51, 2022.
\newblock URL: \url{http://jmlr.org/papers/v23/20-1180.html}.

\bibitem{GKM19}
A.~Gorodetsky, S.~Karaman, and Y.~Marzouk.
\newblock A continuous analogue of the tensor-train decomposition.
\newblock {\em Computer Methods in Applied Mechanics and Engineering},
  347:59--84, 2019.
\newblock \href {https://doi.org/10.1016/j.cma.2018.12.015}
  {\path{doi:10.1016/j.cma.2018.12.015}}.

\bibitem{RR23}
A.~R. Rao and M.~Reimherr.
\newblock Nonlinear functional modeling using neural networks.
\newblock {\em Journal of Computational and Graphical Statistics},
  32(4):1248--1257, 2023.
\newblock \href {https://doi.org/10.1080/10618600.2023.2165498}
  {\path{doi:10.1080/10618600.2023.2165498}}.

\bibitem{Colbrook23}
M.~J. Colbrook.
\newblock The {mpEDMD} algorithm for data-driven computations of
  measure-preserving dynamical systems.
\newblock {\em SIAM Journal on Numerical Analysis}, 61(3):1585--1608, 2023.
\newblock \href {https://doi.org/10.1137/22M1521407}
  {\path{doi:10.1137/22M1521407}}.

\end{thebibliography}

\end{document}